\definecolor{darkred}{rgb}{0.4,0.1,0.1}
\definecolor{darkred2}{rgb}{0.4,0.1,0.1}
\definecolor{darkblue}{rgb}{0.1,0.1,0.4}
\definecolor{darkgrey}{rgb}{0.5,0.5,0.5}
\tikzset{
>=stealth',
help lines/.style={dashed, thick},
axis/.style={<->},
important line/.style={thick},
connection/.style={thick, dotted},
}
\newcommand{\cw}{0.6}
\newcommand{\dom}{{\mathrm{dom\,}}}
 \newtheorem{thm}{Theorem}[section]
 \newtheorem{cor}[thm]{Corollary}
 \newtheorem{lem}[thm]{Lemma}
 \newtheorem{prop}[thm]{Proposition}
 \theoremstyle{remark}
 \newtheorem{rem}[thm]{Remark}
 \numberwithin{equation}{section}
\theoremstyle{definition}
\newcommand{\ba}{\begin{array}}
\newcommand{\ea}{\end{array}}
\newcommand{\bea}{\begin{eqnarray}}
\newcommand{\eea}{\end{eqnarray}}
\newcommand{\bead}{\begin{eqnarray*}}
\newcommand{\eead}{\end{eqnarray*}}
\newcommand{\be}{\begin{equation}}
\newcommand{\ee}{\end{equation}}
\newcommand{\bed}{\begin{displaymath}}
\newcommand{\eed}{\end{displaymath}}
\newcommand{\bl}{\begin{lem}}
\newcommand{\el}{\end{lem}}
\newcommand{\bp}{\begin{prop}}
\newcommand{\ep}{\end{prop}}
\newcommand{\bt}{\begin{thm}}
\newcommand{\et}{\end{thm}}
\newcommand{\bc}{\begin{cor}}
\newcommand{\ec}{\end{cor}}
\newcommand{\br}{\begin{rem}}
\newcommand{\er}{\end{rem}}
\newcommand{\bd}{\begin{defn}}
\newcommand{\ed}{\end{defn}}
\DeclareMathOperator\Real{Re}
\DeclareMathOperator\Imag{Im}
\renewcommand\Re{\Real}
\renewcommand\Im{\Imag}
\newcommand\cS{\mathcal S}
\newcommand\fra{\mathfrak a}
\newcommand\ov{\overline}
\newcommand\wt{\widetilde}
\newcommand\wh{\widehat}
\newcommand{\defeq}{\mathrel{\mathop:}=}
\newcommand\sess{\sigma_{\rm ess}}
\newcommand\void[1]{}
\def\sess{\sigma_{\rm ess}}
\def\frs{{\mathfrak s}}
      \def\dC{{\mathbb C}}
   \def\dN{{\mathbb N}}   
      \def\dR{{\mathbb R}}
\def\cS{{\mathcal S}}
\begin{document}

\title[Wigner von-Neumann   perturbations of the Kronig-Penney model]{Spectral analysis of the  half-line Kronig-Penney model with Wigner-von Neumann perturbations}

\author{Vladimir Lotoreichik}
\address{
Department of Computational Mathematics,
Graz University of Technology,
Steyrergasse 30, 8010, Graz, Austria
}
\email{lotoreichik@math.tugraz.at}

\author{Sergey Simonov}
\address{
School of Mathematical Sciences,
Dublin Institute of Technology,
Kevin Street,
Dublin 8,
Ireland}
\email{sergey.a.simonov@gmail.com}

\subjclass[2010]{Primary 34B24; Secondary 47E05, 34L40}

\keywords{Wigner-von Neumann potentials,
point interactions, Kronig-Penney model,
embedded eigenvalues, subordinacy theory, discrete linear systems,
asymptotic integration, compact perturbations.}

\begin{abstract}
The spectrum of the self-adjoint Schr\"odinger operator associated with the Kronig-Penney model
on the half-line has a band-gap structure:
its absolutely continuous spectrum consists of intervals (bands) separated by gaps.
We show that if one changes strengths of interactions or locations of interaction centers
by adding an oscillating and slowly decaying sequence which resembles the classical
Wigner-von Neumann potential, then this structure of the absolutely continuous
spectrum is preserved. At the same time in each spectral band precisely two critical points appear.
At these points ``instable'' embedded eigenvalues may exist. We obtain locations of the critical points and
discuss for each of them the possibility of an embedded eigenvalue to appear.
We also show that the spectrum in gaps remains discrete.
\end{abstract}

\maketitle

\section{Introduction}
In the classical paper~\cite{vNW29} von~Neumann and Wigner studied the
one-dimensional Schr\"odinger operator with the potential of the form $\frac{c\sin(2\omega x)}x$
and discovered that such an operator may have an eigenvalue at the point of the continuous
spectrum $\lambda=\omega^2$. Since then such potentials permanently attracted interest
\cite{Al72,Be91,Be94,HKS91,L10,Ma73,N07, RS78}.
See also recent developments in this context
on the counterpart problem for Jacobi matrices
\cite{JS10, S12} and for the case of periodic differential Schr\"odinger operators with Wigner-von Neumann perturbations \cite{KN07,KS11,LO13,NS11}.
{
In parallel to the progress in the investigation of
 Wigner-von Neumann potentials considerable
interest was attracted by Schr\"odinger
operators with point $\delta$-interactions
\cite{GK85, GO10, Ko89, KM10, L11, M95, SCS94}
and also with more general distributional potentials \cite{EGNT13,ET13,R05, SS99,SS03} (our list of references is of course by no means complete).

So let the discrete set $X=\{x_n\colon n\in\mathbb N\}\subset\dR_+$
with elements enumerated in the increasing order
be such that
\[
0 < \inf_{n\in\dN}|x_{n+1} -x_n| \le
\sup_{n\in\dN}|x_{n+1} -x_n| <\infty
\]
and the real-valued sequence $\alpha=\{\alpha_n\}_{n\in\mathbb N}$ be arbitrary.
We deal with the self-adjoint Schr\"odinger operator
$H_{\varkappa,X,\alpha}$ in $L^2(\mathbb R_+)$
with point interactions of strengths $\alpha$ supported on $X$.
This operator corresponds
to the formal differential expression
\begin{equation*}
\label{expression}
-\frac{d^2}{dx^2} + \sum_{n\in\dN}\alpha_n\delta_{x_n},
\end{equation*}
and to the boundary condition at the origin
\begin{equation}\label{bc}
    \psi(0)\cos\varkappa-\psi'(0)\sin\varkappa = 0,
    \qquad\varkappa\in[0,\pi).
\end{equation}
See Section~\ref{sec:def}
for the mathematically rigorous definition of such operators.
As a special case
the Kronig-Penney model corresponds to the self-adjoint operator
$H_\varkappa: = H_{\varkappa,d\dN,\{\alpha_0\}}$ as above
with some $d > 0$ and $\alpha_0\in\dR$.
It describes the behaviour of a free non-relativistic charged quantum particle interacting with
the lattice $d\dN$.
The constant $\alpha_0$ characterizes the strength of
interaction between the
particle and each interaction center in the lattice.
This interaction can be repulsive ($\alpha_0 >0$), attractive
($\alpha_0 <0$) or absent ($\alpha_0 =0$).
 The spectrum of the operator $H_{\varkappa}$
has a band-gap structure: it consists of infinitely many bands of the purely
absolutely continuous spectrum and outside these bands the spectrum of  $H_{\varkappa}$ is discrete, cf.
\cite[Chapter III.2]{AGHH05}.
The operator  $H_\varkappa$ was first investigated in the classical paper~\cite{KP31} by
Kronig and Penney.

In the present paper we study what happens with
the spectrum of  the Kronig-Penney model  in the case
of perturbation of strengths or positions of interactions
by a slowly decaying oscillating sequence resembling the Wigner-von Neumann potential.
Let  the
constants $d,\alpha_0,c,\omega,\phi,\gamma$ and a
real-valued sequence $\{q_n\}_{n\in\mathbb N}$ be such that
\begin{equation}\label{conditions on constants}
\begin{split}
& d > 0,~ \alpha_0 \in\dR,~c\in\dR\setminus\{0\},     ~\omega\in{ \left(0,\pi\right)\setminus\{\pi/2\}},\\
    &
    { \phi \in[0,2\pi)},~\gamma\in\left(\frac12,1\right],
    ~\{q_n\}_{n\in\dN}\in \ell^1(\dN).
    \end{split}
\end{equation}

\emph{Model I: Wigner-von Neumann amplitude perturbation.}
We add a discrete Wigner-von Neumann potential to the constant sequence of interaction strengths. Namely, we consider the discrete set $\wt X$ and the sequence of interaction strengths $\wt \alpha$ given by
\begin{equation}\label{wt X}
    \wt x_n := nd, \quad n\in\dN,
\end{equation}
and
\begin{equation}\label{wt alpha}
    \wt\alpha_n:=\alpha_0+\frac{c\sin(2\omega n {+ \phi})}{n^{\gamma}}+q_n,\quad n\in\dN.
\end{equation}
We study the self-adjoint operator $H_{\varkappa,\wt X,\wt \alpha}$
which reflects an amplitude  perturbation of the Kronig-Penney model.

\emph{Model II: Wigner-von Neumann positional perturbation.}
We change the distances between interaction centers in a ``Wigner-von Neumann''  way, i.e.\textcolor{red}{,}
we add a sequence of the form of Wigner-von Neumann potential to the coordinates
of interaction centers leaving the strengths constant.
Let the discrete set $\wh X$ and the sequence of strengths $\wh\alpha$ be \begin{equation}\label{wh X}
    \wh x_n :=nd+\frac{c\sin(2\omega n { +\phi})}{n^{\gamma}}+q_n, \quad n\in\dN,
\end{equation}
and
\begin{equation}\label{wh alpha}
    \wh\alpha_n := \alpha_0, \quad n\in\dN.
\end{equation}
We study the operator $H_{\varkappa,\wh X,\wh \alpha}$
which reflects a positional perturbation of the Kronig-Penney model
and describes properties of one-dimensional crystals with global defects.

%
%
%

We also mention that local defects in the Kronig-Penney model are discussed in \cite[\S III.2.6]{AGHH05};
situations of random perturbations of positions  were recently considered
in~\cite{HIT10}.

The essential spectrum of the operator $H_\varkappa$ has a band-gap structure
similar to the case of Schr\"odinger operator with regular periodic potential:
\begin{equation*}
\begin{split}
	&\sigma_{\rm ess}(H_\varkappa) =
    \sigma_{\rm ac}(H_\varkappa) =
    \bigcup_{n=1}^\infty
    \Bigl(\bigl[\lambda_{2n-1}^+,	
	\lambda_{2n-1}^-\bigr]\cup \bigl[\lambda_{2n}^-,\lambda_{2n}^+\bigr]\Bigr),\\
	&\text{where }  { \lambda_1^+ < \lambda_1^- \le \lambda_2^- < \lambda_2^+ \le \dots \le \lambda_{2n-1}^+ 	
	<\lambda_{2n-1}^- \le \lambda_{2n}^-<\lambda_{2n}^+  \le \dots}
\end{split}
\end{equation*}
The locations of boundary points of the spectral bands are determined
by the parameters $\alpha_0$ and $d$. Namely, the values $\lambda_n^\pm$ are the $n$-th roots of
the corresponding Kronig-Penney equations
\begin{equation*}
    { L}\left(\sqrt{\lambda}\right) = \pm 1,\quad
    { \lambda\in\dR,~\Im\sqrt{\lambda} \ge 0,}
\end{equation*}
where
\begin{equation}\label{Lyapunov function for delta}
    { L}(k) = \cos(kd)+\alpha_0 \frac{\sin(kd)}{2k},
\end{equation}
and the value $L(0)$ is defined via extension by continuity
{ to the point $k = 0$ of the above function}.
For the details the reader is referred to the monograph~
\cite[Chapter III.2]{AGHH05}.


In the present paper we show that the absolutely continuous spectra of the operators $H_{\varkappa,\wt X,\wt \alpha}$ and
$H_{\varkappa,\wh X,\wh \alpha}$ coincide with the absolutely continuous spectrum of the non-perturbed operator $H_{\varkappa}$.  However the spectrum in bands may not remain purely absolutely continuous. Namely,
at certain points which are called critical embedded eigenvalues may appear.
In each band there are two such points.
The critical points $\lambda_{n,\rm cr}^\pm$ in the $n$-th spectral band
are the  $n$-th roots of the equations
\begin{equation*}
   { L}\left(\sqrt{\lambda}\right) = \pm\cos\omega, \quad
    { \lambda\in\dR,~\Im\sqrt{\lambda} \ge 0.}
\end{equation*}
The illustration is given in Figure \ref{fig1}.
 
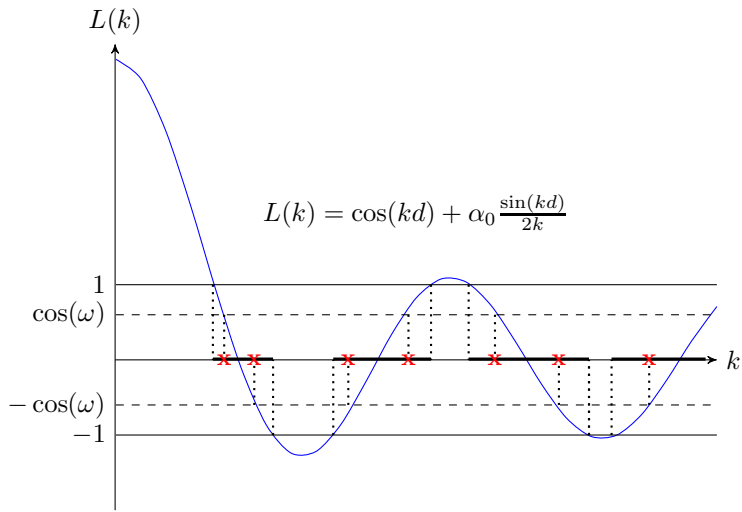
\begin{figure}[h]
\label{fig1}
\begin{tikzpicture}[domain=0:7.8]
    \draw[->] (0.0,0) -- (8.0,0) node[right] {$k$};
    \draw[->] (0,-2.0) -- (0,4.2) node[above] {$L(k)$};
\draw [blue, domain= 0.01:8, smooth] plot (\x, {(4.0 *  sin((\x * 1.5) r ) / (2.0 * \x) + cos((\x * 1.5)  r))});
\draw (4.0, 1.6)  node[above] {$L(k) = \cos(kd)+  \alpha_0 \frac{\sin(kd)}{2k}$};
 \draw (0.0,-1) node[left] {$-1$};
 \draw  (0.0,1) node[left] {$1$};
 \draw (0.0,0.6) node[left] {$\cos(\omega)$};
 \draw  (0.0,-0.6) node[left] {$-\cos(\omega)$};

\draw [black, domain= 0.01:8, smooth, dashed] plot (\x, {0.6});
\draw [black, domain= 0.01:8, smooth, dashed] plot (\x, {-0.6});
\draw [black, domain= 0.01:8, smooth] plot (\x, {1.0});
\draw [black, domain= 0.01:8, smooth] plot (\x, {-1.0});

\draw[connection] (1.3,1.00) -- (1.3,0.0) node[below] {
};

\draw[connection] (2.1,-1.00) -- (2.1,0.0) node[above] {
};
\draw[very thick, color=black] (1.3,0.01) -- (2.1, 0.01);

\draw[connection] (1.45,\cw) -- (1.45,0); 
\draw[color = red, very thick] (1.45,0.0) node[] {\small \bf x};

\draw[connection] (1.85,-\cw) -- (1.85,0);
\draw[color = red, very thick] (1.85,0.0) node[] {\small \bf x};

\draw[connection] (2.9,-1.00) -- (2.9,0.0) node[below] {
};

\draw[connection] (4.2,1.00) -- (4.2,0.0) ;
\draw[very thick, color=black] (2.9,0.01) -- (4.2, 0.01);

\draw[connection] (3.1,-\cw) -- (3.1,0); 
\draw[color = red, very thick] (3.1,0.0) node[] {\small \bf x};

\draw[connection] (3.9,\cw) -- (3.9,0);
\draw[color = red, very thick] (3.9,0) node[] {\small \bf x};

\draw[connection] (4.7,1.00) -- (4.7,0.0);

\draw[connection] (6.3,-1.00) -- (6.3,0.0);
\draw[very thick, color=black] (4.7,0.01) -- (6.3, 0.01);

\draw[connection] (5.9,-\cw) -- (5.9, 0); 
\draw[color = red, very thick] (5.9,0) node[] {\small \bf x};

\draw[connection] (5.05,\cw) -- (5.05,0);
\draw[color = red, very thick] (5.05,0) node[] {\small \bf x};

\draw[connection] (6.6,-1.00) -- (6.6,0.0);;

\draw[connection] (6.3,-1.00) -- (6.3,0.0);
\draw[very thick, color=black] (6.6,0.01) -- (7.85, 0.01);

\draw[connection] (7.1,-0.6) -- (7.1,  0); 
\draw[color = red, very thick] (7.1,0) node[] {\small \bf x};
\end{tikzpicture}
\caption{{\em The curve} is the graph of $L(\cdot)$; {\em bold intervals} are bands of 
the absolutely continuous spectrum; {\em crosses} denote the critical points $\{\lambda_{n,\rm cr}^\pm\}_{n\in\dN}$.}
\end{figure}

For the considered operators  we give exact conditions which ensure that a given critical point  is
indeed  an embedded eigenvalue for some $\varkappa\in[0,\pi)$.  For a given point this can occur
only for one value of $\varkappa\in[0,\pi)$. We calculate the asymptotics of  generalized
eigenfunctions for all  values of the spectral parameter $\lambda \in\dR$, except the endpoints of the bands. The possibility of the appearance of an embedded
eigenvalue at certain critical point depends on the rate of the decay of the subordinate generalized
eigenfunction.  It turns out that for Model I due to { the} decay rate of generalized eigenfunctions embedded eigenvalues can appear only in the low lying bands of the absolutely continuous spectrum, whereas for Model II one can create arbitrarily large embedded eigenvalues varying $\varkappa\in[0,\pi)$. We also show that the spectrum in gaps remains discrete.


Our results are close to the results for one-dimensional Schr\"od\-inger operator
with the Wigner-von Neumann potential and a periodic background potential. Such  operators
were considered recently in~\cite{KN07, KS11, LO13, NS11}.

To study spectra in bands we make a discretization
of the spectral equations and further we perform an asymptotic integration of { the} obtained discrete linear system
using Benzaid-Lutz-type theorems \cite{BL87}. As the next step we apply a modification of
Gilbert-Pearson subordinacy theory \cite{GP87}.
To study spectra in gaps we use compact
perturbation argument.

The reader can trace some analogies of our case with Jacobi matrices. The coefficient matrix of the
discrete linear system that appears in our analysis has a form similar to the transfer-matrix
for some Jacobi matrix.

The body of the paper contains two parts: the preliminary part --- which consists of mostly known
material --- and the main part, where we obtain  new results.
In the preliminary part we give a
rigorous definition of one-dimensional Schr\"odinger operators with $\delta$-interactions (Section~\ref{sec:def}),
show how to reduce the spectral equations for these operators
to  discrete linear systems in $\dC^2$ (Section~\ref{sec:system}), provide a formulation of { the analogue of the  subordinacy theory} for { the considered operators} (Section~\ref{sec:subord}). Further we
formulate few results from asymptotic integration theory for discrete linear
systems (Section~\ref{sec:BL}).
In the main part, in Section~\ref{sec:class} we study a special class of discrete linear systems in $\dC^2$ and find asymptotics of solutions of these systems. After certain technical preliminary calculations in Section~\ref{sec:decomp},
we proceed to Section~\ref{sec:main}, where we obtain asymptotics of generalized { eigenfunctions}
for Schr\"odinger operators with point interactions subject to Model I and Model II. Further we pass to the conclusions about  the spectra in bands putting an emphasis on critical points.
In Section~\ref{sec:compact} we prove compactness
of resolvent differences of two Schr\"odinger operators with
point interactions under certain assumptions on interaction
strengths and positions of interactions. This result
is used to show that the  spectrum  in gaps of the Schr\"odinger
operators with point interactions subject to Models I and II
remains discrete.

\subsection*{Notations}
By small letters with integer subindices, e.g. $\xi_n$, we denote sequences of
complex numbers. By small letters with integer subindices and arrows above, e.g. $\overrightarrow u_n$, we denote sequences of  $\dC^2$-vectors.
By capital letters with integer indices, e.g. $R_n$, we denote sequences of
$2\times 2$ matrices with complex entries. We use notations
$\ell^p(\dN)$, $\ell^p(\dN,\dC^2)$ and $\ell^p(\dN,\dC^{2\times 2})$  for spaces of  summable ($p=1$), square-summable ($p=2$) and bounded ($p=\infty$) sequences of  complex values, complex two-dimensional vectors and complex $2\times 2$ matrices, respectively.
{ For a self-adjoint operator $H$} we denote  its pure point, absolutely continuous, singular continuous, essential and discrete spectra by
$\sigma_{\rm pp}(H)$, $\sigma_{\rm ac}(H)$, $\sigma_{\rm sc}(H)$,  $ \sigma_{\rm ess}(H)$ and
$\sigma_{\rm d}(H)$, respectively. { Throughout the paper
for $\lambda\in\dR$ we choose by default the branch of the square root so that $\Im\sqrt{\lambda} \ge 0$ if the opposite is not said.}

\section{Preliminaries}
\label{sec:prelim}

\subsection{Definition of operators with point interactions}
\label{sec:def}
In this section we give a rigorous definition of operators with $\delta$-interactions, see, e.g.,
\cite{GK85, Ko89}.
Let $\alpha=\{\alpha_n\}_{n\in\dN}$ be a sequence of real numbers and let $X = \{x_n\colon n\in\dN\}$ be a discrete set on $\dR_+$ ordered as $0<x_1<x_2< \dots$. Assume that the set $X$ satisfies
\begin{equation}\label{condition on X}
	\inf_{n\in\dN}\big|x_{n+1}-x_n\big|>0\quad\text{and}\quad \sup_{n\in\dN}\big|x_{n+1}-x_n\big|<\infty.
\end{equation}
Denote also $x_0:=0$. In order to define the operator corresponding to the formal expression\footnote{In \eqref{diffexpression} we denote usual $\delta$-distribution supported on $x_n$ by $\delta_{x_n}$.}
\begin{equation}
\label{diffexpression}
    \tau_{X,\alpha}:=-\frac{d^2}{dx^2}+\sum_{n\in\mathbb N}\alpha_n\delta_{x_n}
\end{equation}
and the boundary condition
\begin{equation*}
    \psi(0)\cos\varkappa-\psi'(0)\sin\varkappa = 0,
\end{equation*}
consider the following set of functions:
\begin{equation*}
	\cS_{X,\alpha} \defeq
	\Biggl\{
 	\psi:\ \psi,\psi^{\prime} \in AC_{\rm loc}(\dR_+\setminus X),
	\begin{smallmatrix}
	 	\psi(x_n+) = \psi(x_n-)=\psi(x_n)
        \\
		\psi^\prime(x_n+) - \psi^{\prime}(x_n-) = \alpha_n\psi(x_n)
	\end{smallmatrix},
    ~n\in\dN\Biggr\}
\end{equation*}
and let the operator $H_{\varkappa,X,\alpha}$ be defined in $L^2(\mathbb R_+)$ by its action
\begin{equation*}
\label{action}
H_{\varkappa, X, \alpha}\psi:= -\psi''
\end{equation*}
on the domain
\begin{equation*}
	\dom H_{\varkappa, X, \alpha}
                    :=
	\left\{
 	\psi \in\cS_{X,\alpha}\colon \psi,
 	 \psi''\in L^2(\dR_+),\psi(0)\cos\varkappa = \psi^\prime(0)\sin\varkappa
	\right\}.
\end{equation*}
According to \cite[Theorem 3.1]{GK85} the operator $H_{\varkappa,X,\alpha}$
is self-adjoint.

The  spectral equation $\tau_{X,\alpha}\psi=\lambda\psi$ is understood as the equation $-\psi''(x)=\lambda\psi(x)$ for $\psi\in\cS_{X,\alpha}$. The latter is equivalent to the following system:
\begin{equation}\label{spectral equation for delta}
\begin{split}
    &-\psi''(x) =\lambda\psi(x),\quad x\in\dR_+\setminus X,
    \\[0.5ex]
    &\psi(x_n+)=\psi(x_n-),\ \psi'(x_n+)=\psi'(x_n-)+\alpha_n\psi(x_n-),\quad n\in\dN.
\end{split}
\end{equation}
The equation \eqref{spectral equation for delta} has two linearly independent
solutions which are called generalized eigenfunctions.
If $\psi\in L^2(\dR_+)$ satisfies~\eqref{spectral equation for delta} and the boundary condition at the origin,
then $\psi$ is an eigenfunction of $H_{\varkappa,X,\alpha}$.


\subsection{Reduction of the eigenfunction equation to a discrete linear system}
\label{sec:system}
In this subsection we recall rather well-known way of reduction of the spectral equation
\eqref{spectral equation for delta} to a discrete linear system.
Let the discrete set $X$ and the sequence of strengths $\alpha$ be as in the previous section. { Fix $\lambda\in\dR\setminus\{0\}$
and set $k := \sqrt{\lambda}$.} To make our formulas more compact we introduce the following notations
\begin{equation*}
    s_n(k):=\sin\bigl(k(x_{n+1}-x_n)\bigr)\quad\text{and}\quad c_n(k):=\cos\bigl(k(x_{n+1}-x_n)\bigr), \qquad n\in\dN_0.
\end{equation*}

\begin{rem}
 Note that for $\lambda < 0$ the value $k$ is purely imaginary, and in this case we use identities $\sin(i\alpha) = i \sinh(\alpha)$ and $\cos(i\alpha) =\cosh(\alpha)$.
\end{rem}

{ For a solution $\psi$ of \eqref{spectral equation for delta} we introduce the sequence
\begin{equation*}
    \xi_n:=\psi(x_n),\quad n\in\dN_0.
\end{equation*}
Assume that the condition  \eqref{condition on X} is satisfied
and $s_n(k) \ne 0$ for all $n \ge N(k)$ with some
$N(k)\in\dN$. Then by
\cite[Chapter III.2]{AGHH05}  for $n\ge N(k)+1$ one has that
\begin{equation}\label{equation for xi}
	-k\Biggl(\frac{\xi_{n-1}}{s_{n-1}(k)}+\frac{\xi_{n+1}}{s_n(k)}\Biggr)
    +\Biggl(\alpha_n
    +k\Biggl(\frac{c_{n-1}(k)}{s_{n-1}(k)}+ \frac{c_n(k)}{s_n(k)}\Biggr)\Biggr)\xi_n=0.
\end{equation}
Inversely, solutions of the eigenfunction equation on each of the intervals $(x_n,x_{n+1})$ can be recovered from their values at the endpoints $x_n$ and $x_{n+1}$:
\begin{equation}\label{psi from xi}
     \psi(x)=\frac{\xi_n\sin(k(x_{n+1}-x))+\xi_{n+1}\sin(k(x-x_n))}{\sin(k(x_{n+1}-x_n))},
    \quad
x\in[x_n,x_{n+1}].
\end{equation}
The reader may confer with \cite{E97}, where a more general case of a quantum graph is considered.

Instead of working with recurrence relation \eqref{equation for xi}
 we will consider a discrete linear system in $\dC^2$. Define
\begin{equation*}\label{vectors u and v}
    \overrightarrow u_n := \begin{pmatrix}\xi_{n-1} \\ \xi_n \end{pmatrix}.
\end{equation*}
Observe that \eqref{equation for xi} can be rewritten as
\[
\begin{split}
\xi_{n+1} & = \tfrac{s_n(k)}{k}
\Big(\alpha_n + k\Big(\tfrac{c_{n-1}(k)}{s_{n-1}(k)}
+\tfrac{c_{n}(k)}{s_{n}(k)}\Big)\Big)\xi_n
-\tfrac{s_n(k)}{s_{n-1}(k)}\xi_{n-1}\\
& = \Big(\tfrac{\alpha_n s_n(k)}{k}
 + \tfrac{c_{n-1}(k)s_n(k) + s_{n-1}(k)c_n(k)}{s_{n-1}(k)}\Big)\xi_n
-\tfrac{s_n(k)}{s_{n-1}(k)}\xi_{n-1}\\
& =
\Big(\tfrac{\alpha_n s_n(k)}{k}
 +\tfrac{\sin(k(x_{n+1}-x_{n-1}))}{s_{n-1}(k)}\Big)\xi_n
-\tfrac{s_n(k)}{s_{n-1}(k)}\xi_{n-1}.
\end{split}
\]
The above recurrence relation is then equivalent to
\begin{equation}\label{system for delta}
    \overrightarrow u_{n+1}=T_n(k)\overrightarrow u_n
\end{equation}
with
\begin{equation}\label{T-n-k}
    T_n(k):=\left(
    \begin{array}{cc}
    0 & 1 \\
    -\frac{s_n(k)}{s_{n-1}(k)}
	&
    \frac{\sin(k(x_{n+1}-x_{n-1}))}{s_{n-1}(k)}+\frac{\alpha_ns_n(k)}{k}   \\
    \end{array}
    \right).
\end{equation}
The coefficient matrix of this system $T_n(k)$ is called the transfer-matrix.
{
\begin{rem}
\label{rem:T-n-0}
The case $\lambda = 0$ requires separate consideration. In this
special case $s_n(k)=0$, but, in all the formulas, expressions of the form $\sin(xk)/k$ should be substituted by $x$, its limit as $k\rightarrow0$ and at the same time solution of the equation $-\psi''(x)=0$ which is zero at { the point $x = 0$} and has the derivative equal to one there, { cf.} \cite{E97}. This gives
\begin{equation}\label{equation for xi 0}
	-\Biggl(\frac{\xi_{n-1}}{x_n-x_{n-1}}+\frac{\xi_{n+1}}{x_{n+1}-x_n}\Biggr)
    +\Biggl(\alpha_n
    +\Biggl(\frac1{x_n-x_{n-1}}+ \frac1{x_{n+1}-x_n}\Biggr)\Biggr)\xi_n=0
\end{equation}
instead of \eqref{equation for xi}. { Equation \eqref{psi from xi}
should be replaced by}
\begin{equation}\label{psi from xi 0}
     \psi(x)=\frac{\xi_n(x_{n+1}-x)+\xi_{n+1}(x-x_n)}{(x_{n+1}-x_n)},\quad x\in[x_n,x_{n+1}]
\end{equation}
{ Instead of \eqref{T-n-k} one gets}
\begin{equation}\label{T-n-k 0}
T_n(0) :=\left(
    \begin{array}{cc}
    0 & 1 \\
    -\frac{x_{n+1}-x_{n}}{x_n -x_{n-1}}
	&
    \frac{x_{n+1}-x_{n-1}}{x_n -x_{n-1}}+\alpha_n(x_{n+1} -x_n)  \\
    \end{array}
    \right).
\end{equation}
\end{rem}
}
\subsection{Subordinacy}

 \label{sec:subord}
The subordinacy theory as suggested in \cite{GP87} by D.~Gilbert and D.~Pearson
produced a strong influence on the spectral theory of one-dimensional Schr\"odinger operators. Later on the subordinacy theory was translated to difference equations \cite{KP92}.
For Schr\"odinger operators with $\delta$-interactions there exists a modification of the subordinacy theory,
see, e.g., \cite{SCS94} which  relates the spectral properties of the operator $H_{\varkappa,X,\alpha}$
with the asymptotic behavior of the solutions of the spectral equation~\eqref{spectral equation for delta}.
Analogously to the classical definition of the subordinacy \cite{GP87}
we say that a solution $\psi_1$ of the equation $\tau_{X,\alpha}\psi =\lambda\psi$
is subordinate if and only  if for any other solution $\psi_2$ of the same equation not proportional to
$\psi_1$  the following limiting property holds:
\begin{equation*}
    \lim_{x\rightarrow+\infty}\frac{\int_0^x |\psi_1(t)|^2dt}{\int_0^x |\psi_2(t)|^2dt} = 0.
\end{equation*}
We will use the following propositions to { locate} the absolutely continuous spectrum.

\begin{prop}\cite[Proposition 7]{SCS94}
\label{prop:stolz}
Let  $H_{\varkappa, X,\alpha}$  be the self-adjoint operator
corresponding to the discrete set $X$ and the sequence of strengths $\alpha$ as in Section~\ref{sec:def}.
Assume that for all $\lambda\subset (a,b)$ there is no subordinate solution for
the spectral equation $\tau_{X,\alpha}\psi = \lambda\psi$.
Then $[a,b] \subset \sigma(H_{\varkappa, X,\alpha})$ and $\sigma(H_{\varkappa, X,\alpha})$
is purely absolutely continuous in $(a,b)$.
\end{prop}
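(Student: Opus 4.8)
The plan is to derive the statement from the subordinacy dichotomy for Schr\"odinger operators with $\delta$-interactions established in \cite{SCS94}, which is the exact analogue of Gilbert--Pearson theory \cite{GP87} and of its difference-equation version \cite{KP92}. First I would fix the spectral-theoretic framework. Since the interaction centres do not accumulate and the spacings are bounded, see \eqref{condition on X}, the operator $H_{\varkappa,X,\alpha}$ is in the limit-point case at $+\infty$, hence unitarily equivalent to multiplication by the independent variable in $L^2(\dR,d\rho)$ for a scalar spectral measure $\rho$ governed by the half-line Weyl function $m(z)$, a Herglotz function with $d\rho(\lambda)=\tfrac1\pi\,\Im m(\lambda+i0)\,d\lambda+d\rho_{\rm s}(\lambda)$. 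Every spectral statement about $H_{\varkappa,X,\alpha}$ on $(a,b)$ becomes a statement about $\rho\restr{(a,b)}$; in particular $\lambda\in\sigma(H_{\varkappa,X,\alpha})$ precisely when $\rho$ charges every neighbourhood of $\lambda$.

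Next I would invoke the subordinacy dichotomy in this setting. Let $\psi_\varkappa$ be the solution of $\tau_{X,\alpha}\psi=\lambda\psi$ obeying the boundary condition \eqref{bc}, and fix a linearly independent companion solution $\psi_\perp$. The key input of \cite[Proposition 7]{SCS94} --- obtained from the de la Vall\'ee Poussin decomposition of $\rho$ together with the quantitative link between $\Im m(\lambda+i\eps)$, $|m(\lambda+i\eps)|$ and the truncated norms $\int_0^L|\psi_\varkappa|^2$, $\int_0^L|\psi_\perp|^2$ as $L\to\infty$ --- is that the set $\Sigma:=\{\lambda:\ \text{no solution of }\tau_{X,\alpha}\psi=\lambda\psi\text{ is subordinate}\}$ is an essential support of the absolutely continuous part $\rho_{\rm ac}$, meaning $\rho_{\rm ac}(\dR\setminus\Sigma)=0$ together with $0<\Im m(\lambda+i0)<\infty$ for Lebesgue-a.e.\ $\lambda\in\Sigma$, while the singular part $\rho_{\rm s}$ is carried by the set of those $\lambda$ for which a subordinate solution satisfying \eqref{bc} exists.

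Then I would combine these facts. By hypothesis $(a,b)\subset\Sigma$, so no subordinate solution exists anywhere in $(a,b)$; a fortiori no subordinate solution satisfies \eqref{bc}, whence $\rho_{\rm s}\big((a,b)\big)=0$, and moreover $0<\Im m(\lambda+i0)<\infty$, i.e.\ $\tfrac{d\rho_{\rm ac}}{d\lambda}(\lambda)\in(0,\infty)$, for Lebesgue-a.e.\ $\lambda\in(a,b)$. Hence $\rho\restr{(a,b)}$ is purely absolutely continuous and mutually absolutely continuous with Lebesgue measure on $(a,b)$; translating back, $\sigma(H_{\varkappa,X,\alpha})$ is purely absolutely continuous in $(a,b)$, and since a subset of $(a,b)$ of full Lebesgue measure has essential closure $[a,b]$, every point of $[a,b]$ has $\rho$-positive neighbourhoods, i.e.\ $[a,b]\subset\sigma(H_{\varkappa,X,\alpha})$.

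The step I expect to be the genuine obstacle is the subordinacy dichotomy itself: proving that $\Sigma$ is an essential support of $\rho_{\rm ac}$ and that $\rho_{\rm s}$ is carried by the subordinate set. In the $\delta$-interaction context this means re-running the Gilbert--Pearson boundary-behaviour estimates with the transfer matrices \eqref{T-n-k} in place of the Schr\"odinger ones, controlling $\int_{x_n}^{x_{n+1}}|\psi|^2$ explicitly through \eqref{psi from xi} and using that $\det T_n(k)=s_n(k)/s_{n-1}(k)$, so that the partial products telescope and solutions of the discrete system \eqref{system for delta} track the $L^2$-size of $\psi$ up to bounded factors. This is precisely what is carried out in \cite{SCS94}, on which we rely, so no further argument is needed beyond the reduction above.
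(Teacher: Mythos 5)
Your proposal is correct, and it matches the paper's treatment: the paper gives no proof of Proposition~\ref{prop:stolz} at all, citing it directly as \cite[Proposition 7]{SCS94}, and your outline just reproduces the standard Gilbert--Pearson subordinacy dichotomy (singular part carried by the subordinate set, the no-subordinate-solution set being an essential support of the a.c.\ part with mutual absolute continuity with Lebesgue measure) while deferring the hard step to the same reference. Since you rely on \cite{SCS94} exactly where the paper does, this is essentially the same approach.
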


\begin{prop}
\label{prop:Stolz2}
Let the discrete set $X = \{x_n\colon n\in\dN\}$ on $\dR_+$ be ordered as $0<x_1<x_2< \dots$. Assume that $X$ satisfies conditions
$\Delta x_* := \inf_{n\in\dN} | x_{n+1} - x_n | > 0$ and $\Delta x^* := \sup_{n\in\dN} | x_{n+1} - x_n | < \infty$. Let
$\alpha = \{\alpha_n\}_{n\in\dN}$ be a sequence of real numbers.
Let $\lambda \in \dR$ and set $k :=\sqrt{\lambda}$.
 In the case $\lambda \ne 0$ assume also that $\liminf_{n\rightarrow\infty}
\big| { s_n(k)} \big| > 0$ holds.
If every solution of the equation $\tau_{X,\alpha}\psi = \lambda\psi$ (see \eqref{spectral equation for delta})
is bounded, then for such $\lambda$ there exists no subordinate solution.
\end{prop}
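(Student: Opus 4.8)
The plan is to relate the square-integrability properties of solutions $\psi$ of the spectral equation to the $\ell^2$-properties of the sampled sequence $\xi_n = \psi(x_n)$, and then to invoke the comparison $\int_0^x|\psi|^2\,dt \asymp \sum_{n\le N(x)} |\xi_n|^2$. First I would fix $\lambda\in\dR$ and write $k=\sqrt\lambda$, so that by hypothesis $s_* := \liminf_{n}|s_n(k)|>0$; together with $\Delta x_*>0$ and $\Delta x^*<\infty$ this gives two-sided bounds $0<s_*\le |s_n(k)|\le 1$ for all large $n$ and also $|c_n(k)|\le 1$. The key geometric fact is formula \eqref{psi from xi}: on each interval $[x_n,x_{n+1}]$ the solution $\psi$ is the explicit trigonometric interpolant of its endpoint values $\xi_n,\xi_{n+1}$, divided by $\sin(k(x_{n+1}-x_n)) = s_n(k)$. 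Hence there is a constant $C=C(k,\Delta x_*,\Delta x^*,s_*)$ with
\begin{equation*}
\int_{x_n}^{x_{n+1}} |\psi(t)|^2\,dt \le C\bigl(|\xi_n|^2 + |\xi_{n+1}|^2\bigr), \qquad n \text{ large}.
\end{equation*}
(If $\lambda<0$ one uses $\sinh$ in place of $\sin$, and if $\lambda=0$ a linear interpolant; in all cases the same type of bound holds.) The first few intervals contribute only a finite amount, so summing over $n$ yields $\int_0^\infty |\psi|^2 \le C' \sum_{n} |\xi_n|^2 + (\text{finite})$.

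Next I would use the boundedness hypothesis. Suppose, for contradiction, that there is a subordinate solution $\psi_1$; let $\psi_2$ be a second, linearly independent solution, so that by hypothesis both are bounded on $\dR_+$. Boundedness of $\psi_1$ forces $\sup_n |\xi_n^{(1)}| =: M_1 < \infty$, where $\xi_n^{(j)} = \psi_j(x_n)$. I now need a lower bound for $\int_0^x|\psi_2|^2$. For this I would argue that the two solution vectors $\overrightarrow u_n^{(1)} = (\xi_{n-1}^{(1)},\xi_n^{(1)})^\top$ and $\overrightarrow u_n^{(2)}$ of the system \eqref{system for delta} are linearly independent, and since the transfer matrices $T_n(k)$ in \eqref{T-n-k} have determinant $\det T_n(k) = s_n(k)/s_{n-1}(k)$ — whence $\prod_{n} \det T_n(k)$ telescopes and stays bounded above and below away from $0$ under our assumptions — the Wronskian $W_n := \det(\overrightarrow u_n^{(1)}\mid \overrightarrow u_n^{(2)})$ satisfies $|W_n| \ge \delta > 0$ for all large $n$. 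Writing out $W_n = \xi_{n-1}^{(1)}\xi_n^{(2)} - \xi_n^{(1)}\xi_{n-1}^{(2)}$ and using $|\xi_{n-1}^{(1)}|,|\xi_n^{(1)}|\le M_1$, we get
\begin{equation*}
\delta \le |W_n| \le M_1\bigl(|\xi_{n-1}^{(2)}| + |\xi_n^{(2)}|\bigr),
\end{equation*}
so $|\xi_{n-1}^{(2)}| + |\xi_n^{(2)}| \ge \delta/M_1$ for all large $n$; in particular $\sum_{n}(|\xi_n^{(2)}|^2) = \infty$ by a standard argument (infinitely many terms are bounded below by a fixed positive constant).

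Finally I would produce the matching lower bound $\int_{x_n}^{x_{n+1}}|\psi_2|^2 \ge c(|\xi_n^{(2)}|^2+|\xi_{n+1}^{(2)}|^2) - (\text{something controllable})$, or more robustly: combine boundedness of $\psi_2$ (hence $\sup_n|\xi_n^{(2)}|<\infty$) with the already-established $\int_0^\infty|\psi_1|^2\le C'\sum_n|\xi_n^{(1)}|^2+\text{(finite)}$, and observe that subordinacy of $\psi_1$ together with boundedness of $\psi_2$ would force $\int_0^x|\psi_1|^2 = o\bigl(\int_0^x|\psi_2|^2\bigr)$, while boundedness of $\psi_2$ gives $\int_0^x|\psi_2|^2 = O(x)$, so $\int_0^x|\psi_1|^2 = o(x)$; this is compatible with $\psi_1$ bounded, so the contradiction must come from the other side — namely one shows directly that a bounded solution cannot be subordinate to any other solution when $\liminf|s_n|>0$. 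Concretely: for two bounded, linearly independent solutions, the lower bound on $|W_n|$ forces $\liminf_n \max(|\xi_n^{(1)}|,|\xi_n^{(2)}|)>0$, and then using the interpolation formula \eqref{psi from xi} one gets $\int_{x_n}^{x_{n+1}}(|\psi_1|^2+|\psi_2|^2) \ge c' > 0$ for all large $n$, whence both $\int_0^x|\psi_1|^2$ and $\int_0^x|\psi_2|^2$ grow at least linearly in $x$; combined with the $O(x)$ upper bounds from boundedness, the ratio $\int_0^x|\psi_1|^2 / \int_0^x|\psi_2|^2$ is bounded above and below away from $0$, contradicting subordinacy of $\psi_1$. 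Hence no subordinate solution exists.

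The main obstacle I expect is the careful bookkeeping near the (finitely many) indices $n$ where $s_{n-1}(k)$ or $s_n(k)$ vanishes or is small — the recurrence \eqref{equation for xi} and the transfer matrix \eqref{T-n-k} are only valid where $s_{n-1}(k),s_n(k)\neq 0$ — and more importantly the precise form of the two-sided comparison between $\int_{x_n}^{x_{n+1}}|\psi|^2\,dt$ and $|\xi_n|^2+|\xi_{n+1}|^2$: the lower bound can degenerate when the interpolant happens to have a node inside the interval, so one must be slightly careful (e.g. average over two consecutive intervals, or use that $\psi$ cannot vanish at two consecutive sample points without vanishing identically). Once the interpolation estimate and the Wronskian lower bound are in place, the contradiction with subordinacy is immediate.
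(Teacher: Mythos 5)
Your strategy is viable, but it is genuinely different from the paper's. The paper stays entirely on the continuous level: differentiating \eqref{psi from xi} and using $\liminf_n|s_n(k)|>0$, it shows that every (bounded) solution also has bounded derivative; constancy of the continuous Wronskian $\psi_1\psi_2'-\psi_1'\psi_2$ then gives a pointwise two-sided bound $0<C_*\le|\psi_1(x)|+|\psi_1'(x)|\le C^*$, and the trick of Stolz \cite{S92} converts this into a subinterval of fixed positive length inside \emph{every} $[x_n,x_{n+1}]$ on which $|\psi_1|$ is bounded below; hence $\int_0^{x_n}|\psi_1|^2$ admits two-sided linear estimates for every solution, and no subordinate solution can exist. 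You instead work with the sampled sequences $\xi_n=\psi(x_n)$ and a discrete Wronskian $W_n$, bounded below via the telescoping determinants $\det T_n(k)=s_n(k)/s_{n-1}(k)$. Your route is purely discrete and fits the transfer-matrix formalism of Section~\ref{sec:system}; the paper's route buys the advantage that it never needs a lower bound for $\int_{x_n}^{x_{n+1}}|\psi|^2$ in terms of the endpoint samples, which is exactly the step you identify as the main obstacle.

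Two points in your write-up need repair before the argument closes. First, the needed lower bound $\int_{x_n}^{x_{n+1}}|\psi|^2\,dt\ge c\,\max\bigl(|\xi_n|^2,|\xi_{n+1}|^2\bigr)$ does hold, but the clean way to get it is not through the interpolation formula (where an interior node can spoil naive estimates) but through uniform norm equivalence on the two-dimensional solution space: on $[x_n,x_{n+1}]$ write $\psi(x)=a\cos\bigl(k(x-x_n)\bigr)+b\sin\bigl(k(x-x_n)\bigr)$ (hyperbolic functions for $\lambda<0$, affine for $\lambda=0$); then $|\xi_n|,|\xi_{n+1}|\le C_k\,(|a|^2+|b|^2)^{1/2}$ with $C_k$ depending only on $k$ and $\Delta x^*$, while by continuity and compactness in the interval length $L\in[\Delta x_*,\Delta x^*]$ there is $c>0$, independent of $n$, with $\int_0^L|a\cos(kx)+b\sin(kx)|^2\,dx\ge c\,(|a|^2+|b|^2)$. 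Second, your final assembly contains a non sequitur: from $\int_{x_n}^{x_{n+1}}\bigl(|\psi_1|^2+|\psi_2|^2\bigr)\ge c'$ you conclude that \emph{both} integrals grow linearly, but a lower bound on the sum controls neither term separately. The fix is already in your own computation: the Wronskian bound together with $\|\psi_1\|_\infty\le M_1$ gives $|\xi_{n-1}^{(2)}|+|\xi_n^{(2)}|\ge\delta/M_1$, and symmetrically $|\xi_{n-1}^{(1)}|+|\xi_n^{(1)}|\ge\delta/M_2$; so \emph{each} solution separately has, in every pair of consecutive samples, one value bounded below, and combined with the interval lower bound above this yields linear growth of $\int_0^{x_n}|\psi_j|^2$ for each $j$, while boundedness gives the $O(n)$ upper bound, so the subordinacy ratio stays bounded away from $0$. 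With these two patches (plus the routine remarks that only finitely many indices have $s_{n-1}(k)s_n(k)=0$, and that $W_N\neq0$ at some large $N$ because proportional vectors $\overrightarrow u_N^{(1)},\overrightarrow u_N^{(2)}$ would force $\psi_1,\psi_2$ to be proportional), your proof is complete.
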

\begin{proof}
Let $\psi_1$ be an arbitrary solution of $\tau_{X, \alpha}\psi = \lambda\psi$. Set $\xi_n := \psi_1(x_n)$ for $n\in\dN_0$.  If $\lambda \ne 0$, then differentiating \eqref{psi from xi} one gets in the case $s_n(k) \ne 0$
\[
|\psi_1'(x)|\le\frac{|k|(|\xi_n|+|\xi_{n+1}|)}{|s_n(k)|}e^{|k|\Delta x^*}\quad\text{for}~x\in[x_n,x_{n+1}].
\]
There exists $N(k)$ such that $\inf_{n\ge N(k)}|s_n(k)| >0 $. Since the sequence $\{\xi_n\}_{n\in\dN_0}$ is bounded, one has that $\psi_1'(x)$ is also bounded for $x\ge x_{N(k)}$
Obviously, for $x
\le x_{N(k)}$, { $\psi'_1$} is bounded too, since it is piecewise continuous with finite jumps at the points of discontinuity.
If $\lambda = 0$, then one gets
\[
|\psi_1'(x)|\le \frac{|\xi_n| + |\xi_{n+1}|}{\Delta x_*}\quad\text{for}~x\in[x_n,x_{n+1}].
\]
Boundedness of $\{\xi_n\}_{n\in\dN_0}$ and the above formula imply boundedness of $\psi_1'$ in the case $\lambda = 0$ { as well.}

Let $\psi_2$ be any other solution of $\tau_{X, \alpha}\psi = \lambda\psi$, which is linearly independent with $\psi_1$. It follows that there exists a constant $C > 0$ such that
\begin{equation*}
\label{C}
\|\psi_1\|_\infty, \|\psi_2\|_\infty, \|\psi_1'\|_\infty, \|\psi_2'\|_\infty\le C.
\end{equation*}
The Wronskian of the solutions $\psi_1$ and $\psi_2$ is independent of $x$:
\begin{equation}
\label{Wronskian}
W\{\psi_1,\psi_2\}:= \psi_1(x)\psi_2'(x) - \psi_1'(x)\psi_2(x),\quad\text{for all}~ x\in\dR_+\setminus X.
\end{equation}
This is easy to check: it is constant on every interval $(x_n,x_{n+1})$ and at the points $\{x_n\}_{n\in\dN}$ one has
\[
\begin{split}
W\{\psi_1,\psi_2\}(x_n+) &=
\psi_1(x_n+)\psi_2'(x_n+) - \psi_1'(x_n+)\psi_2(x_n+)\\
&= \big(\psi_1(x_n-)\psi_2'(x_n-) + \alpha_n\psi_1(x_n-)\psi_2(x_n-)\big)  \\
&\qquad-\big(\psi_1'(x_n-)\psi_2(x_n-) +
\alpha_n\psi_1(x_n-)\psi_2(x_n-)\big)\\
& = W\{\psi_1,\psi_2\}(x_n-),
\end{split}
\]
where we used \eqref{spectral equation for delta}.
The Wronskian is non-zero since the solutions are linearly independent. From \eqref{Wronskian} one has:
\[
|W\{\psi_1,\psi_2\}|\le C(|\psi_1(x)| + |\psi_1'(x)|),\quad \text{for all}~ x\in\dR_+\setminus X,
\]
and therefore there exist constants $C_*$ and $C^*$ such that
\begin{equation}
\label{Stolz estimate}
0<C_* \le |\psi_1(x)| + |\psi_1'(x)| \le C^*,\quad\text{for all} ~ x\in\dR_+\setminus X.
\end{equation}

Now we apply the trick used in the proof of \cite[Lemma 4]{S92}. We consider for an arbitrary
$n\in\dN$ the interval $[x_n, x_{n+1}]$. Since the function $\psi_1$ is { continuously differentiable} on $[x_n, x_{n+1}]$,
the formula
\begin{equation}
\label{id1}
\psi_1(x_{n+1}) - \psi_1(x_n) = \int_{x_n}^{x_{n+1}}\psi_1'(t)dt
\end{equation}
holds. Set
\[
p_* := \frac{\Delta x_*}{\Delta x_* + 2}.
\]
Next we show that there exists a point
$x_n^*\in[x_n,x_{n+1}]$ such that $|\psi_1(x_n^*)| \ge p_*C_*$.
Let us suppose that such a point does not exist, i.e. $|\psi_1(x)| < p_*C_*$ for all $x\in[x_n,x_{n+1}]$.
We get from \eqref{Stolz estimate} that
$|\psi_1'(x)| > (1 - p_*)C_*$ for every $x\in [x_n,x_{n+1}]$.
In particular $\psi_1'$ is sign-definite in $[x_n, x_{n+1}]$,
so using \eqref{id1} and $\Delta x_n \ge \Delta x_*$ we get a contradiction
\begin{equation*}
\begin{split}
&2p_*C_* >
|\psi_1(x_{n+1})| + |\psi_1(x_n)| \ge
|\psi_1(x_{n+1}) - \psi_1(x_n)| = \\[0.5ex]
&\qquad\qquad\qquad =\int_{x_n}^{x_{n+1}}|\psi_1'(t)|dt > \Delta x_n (1 - p_*)C_* \ge \Delta x_* (1 - p_*)C_*=2p_*C_*.
\end{split}
\end{equation*}
Thus the point $x_n^*$ with required properties exists.

Since $|\psi_1'(x)|\le C$, for every $x\in[x_n,x_{n+1}]$ such that
$| x - x_n^*| \le \frac{p_*C_*}{2C}$ one has $|\psi_1(x)| \ge \frac{p_*C_*}{2}$.
We have shown that every interval $[x_n,x_{n+1}]$ contains a subinterval of length
$l:=\min\big(\Delta x_*,\tfrac{p_*C_*}{2C}\big)$,
where $|\psi_1(x)| \ge \frac{p_*C_*}{2}$. Therefore
\[
\int_0^{x_n} |\psi_1(t)|^2dt \ge \frac{lp_*^2C_*^2}{4}n.
\]
On the other hand,
\[
\int_0^{x_n} |\psi_1(t)|^2dt \le \Delta x^*C^2n.
\]
Summing up, for every solution $\psi$ the integral $\int_0^{x_n}|\psi(t)|^2dt$ has two-sided linear estimate.
Thus no subordinate solution exists.
\end{proof}

\subsection{Benzaid-Lutz theorems for discrete linear systems in $\dC^2$}
\label{sec:BL}
The results of \cite{BL87} translate classical theorems due to
N.~Levinson \cite{L48} and W.~Harris and D. Lutz  \cite{HL75} on the asymptotic
integration of ordinary differential linear systems to the case of discrete linear systems.
The major advantage of these methods is that they allow to reduce under certain assumptions
the asymptotic integration of some general discrete linear systems to the asymptotic integration
of diagonal discrete linear systems. For our applications it is sufficient to formulate Benzaid-Lutz theorems only
for discrete linear systems in $\dC^2$.
The first lemma of this subsection is a direct consequence of ~\cite[Theorem 3.3]{BL87}.

 \begin{lem}\label{lemma Benzaid-Lutz different modules}
Let $\mu_{\pm}\in\dC\setminus\{0\}$ be such that $|\mu_+|\neq|\mu_-|$
and let $\{V_n\}_{n\in\dN}\in \ell^2(\dN,\dC^{2\times 2})$.
If the coefficient matrix of the discrete linear system
    \begin{equation*}
        \overrightarrow u_{n+1}=
        \left[
        \left(
          \begin{array}{cc}
            \mu_+ & 0 \\
            0 & \mu_- \\
          \end{array}
        \right)
        +V_n
        \right]
        \overrightarrow u_n
    \end{equation*}
is non-degenerate for every $n\in\dN$,
then this system has a basis of solutions
$\overrightarrow u_n^{\pm}$ with the following asymptotics:
        \begin{equation*}
          \begin{split}
            &\overrightarrow u_n^+=
            \left[
            \begin{pmatrix}    1 \\
                0 \\
              \end{pmatrix}
                      +o(1)
            \right]
            \prod\limits_{k=1}^n(\mu_++(V_k)_{11})\quad \text{as}~n\rightarrow\infty,
            \\[0.5ex]
            &\overrightarrow u_n^-=
            \left[
\begin{pmatrix}
               0 \\
                1 \\
              \end{pmatrix}
            +o(1)
            \right]
            \prod\limits_{k=1}^n(\mu_-+(V_k)_{22})\quad\text{as}~n\rightarrow \infty,
\end{split}
        \end{equation*}
 where by $(V_k)_{11}$ and $(V_k)_{22}$ we denote the diagonal entries of the matrices $V_k$, and the factors $(\mu_+ + (V_k)_{11})$ and $(\mu_-+(V_k)_{22})$ should be replaced by 1 for those values of the index $k$ for which they vanish (only a finite number).
   \end{lem}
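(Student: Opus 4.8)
The plan is to derive this lemma as a specialization of the general Benzaid--Lutz result \cite[Theorem 3.3]{BL87} to the $2\times 2$ diagonal case. First I would normalize the problem: write the system as $\overrightarrow u_{n+1} = (\Lambda_n + W_n)\overrightarrow u_n$, where $\Lambda_n = \diag(\mu_+ + (V_n)_{11}, \mu_- + (V_n)_{22})$ collects the would-be ``leading'' diagonal and $W_n$ is the off-diagonal part of $V_n$ (so $W_n$ has zero diagonal). Since $\{V_n\}\in\ell^2(\dN,\dC^{2\times 2})$, we have both $(V_n)_{ii}\to 0$ and $\{W_n\}\in\ell^2$. The hypothesis $|\mu_+|\neq|\mu_-|$ provides the dichotomy needed for Levinson-type conditions: for all large $n$ the diagonal entries $\lambda_n^{\pm} := \mu_\pm + (V_n)_{\pm\pm}$ satisfy a uniform gap $|\lambda_n^+| / |\lambda_n^-|$ bounded away from $1$ and $\infty$ (say, eventually either $\ge 1+\delta$ or $\le (1+\delta)^{-1}$ for a fixed $\delta>0$), because the perturbations tend to zero. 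This is precisely the (stronger form of the) discrete dichotomy condition under which \cite[Theorem 3.3]{BL87} applies; the $\ell^2$ (or even $\ell^1$ after the standard first reduction) smallness of the off-diagonal perturbation is the second hypothesis of that theorem.

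Next I would invoke \cite[Theorem 3.3]{BL87} directly: under the dichotomy just established and the summability of $\{W_n\}$, the system has a fundamental system of solutions $\overrightarrow u_n^{\pm}$ asymptotic, as $n\to\infty$, to $(e_\pm + o(1))\prod_{k=1}^n \lambda_k^{\pm}$, where $e_+ = \binom{1}{0}$, $e_- = \binom{0}{1}$ are the standard basis vectors (the eigenvectors of the diagonal leading part). Substituting $\lambda_k^{\pm} = \mu_\pm + (V_k)_{\pm\pm}$ gives exactly the claimed formulas. The only bookkeeping point is the parenthetical remark about vanishing factors: if some $\lambda_k^{\pm} = 0$ the product degenerates, but since $(V_k)_{\pm\pm}\to 0$ and $\mu_\pm\neq 0$ this happens for at most finitely many $k$, and replacing those finitely many factors by $1$ changes the product by a fixed nonzero constant, which is absorbed into the $o(1)$-normalization of the solution (and is consistent with the requirement that the coefficient matrix be non-degenerate, which guarantees a genuine two-dimensional solution space throughout). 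One should also note that non-degeneracy of $\Lambda_n + V_n$ for every $n$, together with the asymptotic non-vanishing of $\lambda_n^\pm$, lets us propagate the two solutions back from ``large $n$'' to all $n$ by the transfer recursion without losing linear independence.

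The main obstacle is simply checking that the precise hypotheses of \cite[Theorem 3.3]{BL87} are met in the form stated there --- in particular that the relevant discrete Levinson/dichotomy condition for a $2\times 2$ system reduces, when the off-diagonal part is $\ell^2$ (hence $\ell^1$ after one more standard perturbative reduction) and the diagonal moduli are asymptotically separated, to the bare inequality $|\mu_+|\neq|\mu_-|$. This is a matter of matching our normalization to the normalization in \cite{BL87} (their theorem may be stated for perturbations of a constant diagonal, so one first writes $\Lambda_n = \Lambda + (\Lambda_n - \Lambda)$ with $\Lambda = \diag(\mu_+,\mu_-)$ and notes $\Lambda_n - \Lambda\in\ell^2$, merging it with $V_n$), plus the elementary observation that $|\mu_+|\neq|\mu_-|$ forces the two products $\prod \lambda_k^{\pm}$ to have genuinely different growth rates so that the two asymptotic classes are disjoint. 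No genuinely new analysis is required; the lemma is a packaging of \cite[Theorem 3.3]{BL87} in the form convenient for later use in Section~\ref{sec:class}.
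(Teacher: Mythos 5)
Your proposal is correct and follows essentially the same route as the paper, which offers no independent argument but simply presents the lemma as a direct consequence of \cite[Theorem 3.3]{BL87}; your hypothesis-checking (diagonal dichotomy from $|\mu_+|\neq|\mu_-|$, $\ell^2$-smallness of the perturbation, and restarting the recursion past the finitely many indices where $\mu_\pm+(V_k)_{\pm\pm}$ could vanish, then propagating back by non-degeneracy) is exactly the specialization the paper leaves implicit.
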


The following lemma is a simplification of \cite[Theorem 3.2]{BL87}.

  \begin{lem}\label{lemma Benzaid-Lutz same modules}
Let
$\{t_n\}_{n\in\mathbb N}\in  \ell^2(\dN; \dR)$ and $\{V_n\}_{n\in\mathbb N}\in\ell^2(\dN;\dC^{2\times 2})$
 be such that $t_n\ge 0$ and that the sum
   $\sum\limits_{n=1}^{\infty}V_n$ is (conditionally) convergent
with
\[
\bigg\{\sum_{k=n}^{\infty}V_k\bigg\}_{n\in\dN}\in
\ell^2(\dN;\dC^{2\times 2}).
\]
If for every $n\in\dN$
\[
 \det
        \left[
        \begin{pmatrix}
            1+t_n & 0 \\
            0 & 1-t_n \\
    \end{pmatrix}
         +V_n
        \right]
        \neq0,
\]
    then the discrete linear system
    \begin{equation*}
        \overrightarrow u_{n+1}=
         \left[
        \begin{pmatrix}
            1+t_n & 0 \\
            0 & 1-t_n \\
    \end{pmatrix} +V_n
        \right]
        \overrightarrow u_n
    \end{equation*}
    has a basis of solutions $\overrightarrow u_n^{\pm}$ with the following asymptotics:
    \begin{equation*}
\begin{split}
      &\overrightarrow u_n^+=
        \left[
        \begin{pmatrix}
            1 \\
            0 \\
          \end{pmatrix}
        +o(1)
        \right]
        \prod\limits_{k=1}^n(1+t_k)\quad \text{as}~n\rightarrow \infty,
        \\[0.5ex]
        &\overrightarrow u_n^-=
        \left[
        \begin{pmatrix}
              0 \\
            1 \\
       \end{pmatrix}
        +o(1)
        \right]
        \prod\limits_{k=1}^n(1-t_k) \quad\text{as}~n\rightarrow\infty,
  \end{split}
  \end{equation*}
 where the factor $(1-t_k)$ should be replaced by 1 for those values of the index $k$ for which it vanishes (only a finite number).
    \end{lem}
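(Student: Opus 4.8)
The plan is to recognize the lemma as the special case $\mu_n^{\pm}=1\pm t_n$ of the discrete Harris--Lutz-type theorem \cite[Theorem 3.2]{BL87}, and to verify that conditions (i)--(ii), together with $\{t_n\},\{V_n\}\in\ell^2(\dN)$ and the non-degeneracy hypothesis, are exactly what that theorem requires; below I indicate the mechanism so that the reduction is transparent. Write the system as $\overrightarrow u_{n+1}=(\Lambda_n+V_n)\overrightarrow u_n$ with $\Lambda_n:=\diag(1+t_n,1-t_n)$ and let $D_n:=\diag\big((V_n)_{11},(V_n)_{22}\big)$. First I would absorb $D_n$ into the diagonal, so that the new diagonal part is $\Lambda_n+D_n=\diag\big(1+t_n+(V_n)_{11},\,1-t_n+(V_n)_{22}\big)$; write $(V_n)_{\pm\pm}$ for the corresponding diagonal entry (so $(V_n)_{++}=(V_n)_{11}$, $(V_n)_{--}=(V_n)_{22}$). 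Since $\sum_n(V_n)_{\pm\pm}$ converges by (ii) while $\sum_n|(V_n)_{\pm\pm}|^2<\infty$, one has $\prod_{k=1}^n\big(1\pm t_k+(V_k)_{\pm\pm}\big)=(c_\pm+o(1))\prod_{k=1}^n(1\pm t_k)$ with $c_\pm\neq0$, and the Levinson-type dichotomy (i) is inherited by $\Lambda_n+D_n$; hence it suffices to treat the system with diagonal $\Lambda_n+D_n$ and a purely off-diagonal, still only conditionally summable, perturbation.

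The key step is the preliminary transformation that upgrades this conditionally summable perturbation to an absolutely summable one. Put $W_n:=\sum_{k\ge n}(V_k-D_k)$, which lies in $\ell^2(\dN,\dC^{2\times2})$ by (ii), and look for $2\times2$ matrices $P_n\to0$ such that the substitution $\overrightarrow u_n=(I+P_n)\overrightarrow v_n$ turns the coefficient matrix into $\Lambda_n+D_n+\widetilde V_n$ with $\{\widetilde V_n\}\in\ell^1(\dN,\dC^{2\times2})$. Taking $P_n$ off-diagonal and solving, to leading order, the discrete Sylvester equation $\Lambda_nP_n-P_{n+1}\Lambda_n=V_n-D_n$ — whose off-diagonal components are genuine first-order recurrences, with the relevant ratios of diagonal entries controlled precisely by (i) — a summation by parts replaces $V_n-D_n$ by $P_{n+1}-P_n$ modulo terms that are products of two $\ell^2$ sequences, hence $\ell^1$ by Cauchy--Schwarz; the leftover cross terms $t_nP_n$, $P_nV_n$, $P_{n+1}^2$ and the like are again products of $\ell^2$ sequences and so lie in $\ell^1$. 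This is exactly the construction underlying \cite[Theorem 3.2]{BL87}, and it is here that all three ingredients $\{t_n\}\in\ell^2$, $\{V_n\}\in\ell^2$ and the $\ell^2$-tail clause of (ii) are used.

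With an $\ell^1$ perturbation in hand, I would apply the basic discrete Levinson theorem (the $\ell^1$ version, with the ordering furnished by (i); see \cite{BL87}, cf.\ Lemma~\ref{lemma Benzaid-Lutz different modules}) to obtain a basis $\overrightarrow v_n^{\pm}=(e_\pm+o(1))\prod_{k=1}^n\big(1\pm t_k+(V_k)_{\pm\pm}\big)$, where $e_+,e_-$ are the standard basis vectors of $\dC^2$. Undoing the transformation via $I+P_n=I+o(1)$ and invoking the product comparison from the first paragraph yields $\overrightarrow u_n^{\pm}=(e_\pm+o(1))\prod_{k=1}^n(1\pm t_k)$, which is the assertion. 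Because $\{t_n\}\in\ell^2$ forces $t_n\to0$, at most finitely many factors $1\pm t_k$ vanish, so the convention of replacing a vanishing factor by $1$ changes only finitely many terms and is absorbed into the $o(1)$; the non-degeneracy hypothesis ensures the solution space is two-dimensional and that all the substitutions above are invertible. I expect the main obstacle to be the construction and estimation of $P_n$ in the second paragraph — showing that the Sylvester correction leaves an $\ell^1$ remainder without destroying the dichotomy (i) — but for the present statement this is precisely what \cite[Theorem 3.2]{BL87} supplies, so in practice the proof reduces to checking that (i)--(ii) are its hypotheses specialized to the diagonal $\diag(1+t_n,1-t_n)$.
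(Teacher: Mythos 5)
Your proposal is correct and matches the paper's treatment: the paper gives no independent proof but states the lemma as a simplification of \cite[Theorem 3.2]{BL87}, which is exactly the reduction you carry out (specializing the diagonal to $\diag(1+t_n,1-t_n)$ and checking that conditions (i)--(ii) are that theorem's dichotomy and conditional-summability hypotheses). Your additional sketch of the Harris--Lutz conditioning transformation and the absorption of the conditionally convergent diagonal part into the products is sound and only makes explicit what the citation supplies.
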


\section{Spectral and asymptotic analysis}

\subsection{Asymptotic analysis of a special class of discrete linear systems}
\label{sec:class}
In this section we study a special class of discrete linear systems
that encapsulates system \eqref{system for delta} corresponding to $X = \{x_n\colon n\in\dN\}$ and $\alpha = \{\alpha_n\}_{n\in\dN}$ as in Models I and II
described in the introduction.

Let the parameters $l$, $a$, $b$, $\gamma$ and $\omega$ satisfy
the conditions
\begin{equation}
\label{parameters}
l \in \dR,\quad a,b\in\dC, \quad\gamma\in\left(\frac12,1\right],\quad
\text{and}\quad\omega\in(0,\pi)\setminus\left\{{ \pi/2}\right\}.
\end{equation}
For further purposes we define
\begin{equation}
\label{mu}
\mu_\pm(l) :=l\pm\sqrt{l^2-1}
\end{equation}
with any choice of { the} branch of the square root (although we specify it explicitly below in { the} { subcase} $|l|=|\cos\omega|$). Define further
\begin{equation}
\label{betaphi}
z_{\pm}:=\frac{a e^{-i\omega}\pm be^{-2i\omega}}{2i\sin\omega},\qquad
\beta_{\pm} :=|z_{\pm} |,\qquad\varphi_{\pm}  :=\arg (z_{\pm} ),
\end{equation}
and
\begin{equation}
\label{fn}
f^{\pm}_n(\beta) :=
        \left\{
        \begin{array}{ll}
        \exp\left(\pm\frac{\beta n^{1-\gamma}}{1-\gamma}\right),&\text{ if }\gamma<1,
        \\
        n^{\pm\beta},&\text{ if }\gamma=1.
        \end{array}
        \right.
\end{equation}
The following lemma has technical nature and helps to simplify the analysis of cases (Model I and Model II).
\begin{lem}\label{lemma general}
Let the parameters $l$, $a,b$, $\gamma$ and $\omega$ be as in \eqref{parameters}.
Let $\mu_\pm$, $\beta, \varphi$ and $f_n^\pm(\cdot)$ be as in \eqref{mu}, \eqref{betaphi}
and \eqref{fn}, respectively. Let the sequence of matrices $\{R_n\}_{n\in\dN}\in\ell^1(\dN,\dC^{2\times 2})$ be arbitrary.
If the coefficient matrix of the discrete linear system
 \begin{equation}\label{system on u}
        \overrightarrow u_{n+1}=
        \left[
        \begin{pmatrix}
             0 & 1 \\
            -1 & 2l \\
          \end{pmatrix}
        +
       \begin{pmatrix}
           0 & 0 \\
            a & b \\
        \end{pmatrix}
        \frac{e^{2i\omega n}}{n^{\gamma}}
        +
\begin{pmatrix}
           0 & 0 \\
            \ov a & \ov b \\
\end{pmatrix}
        \frac{e^{-2i\omega n}}{n^{\gamma}}
        +R_n
        \right]
        \overrightarrow u_n
    \end{equation}
is non-degenerate for every $n\in\dN$,
then this system has a basis of solutions $\overrightarrow u_n^{\pm}$ with the following asymptotics.
\begin{itemize}
 \item [\rm (i)] If $|l|\in { [0,+\infty)}\setminus \{|\cos\omega|, 1\}$, then
    \begin{equation*}
    \overrightarrow u_n^{\pm}=
        \left[
        \left(
          \begin{array}{c}
            1 \\
            \mu_{\pm} \\
          \end{array}
        \right)
        +o(1)
        \right]
        \mu_{\pm}^n\quad\text{as}~ n\rightarrow\infty.
    \end{equation*}
  \item[\rm (ii)]  If $l=\cos\omega$, then
    \begin{equation*}
        \overrightarrow u_n^+=
        \left[
        \left(
          \begin{array}{c}
            \cos(\omega n+\varphi_+ /2) \\
            \cos(\omega(n+1)+\varphi_+/2) \\
          \end{array}
        \right)
        +o(1)
        \right]
        f^+_n(\beta_+) \quad \text {as}~n\rightarrow \infty,
    \end{equation*}
    and
    \begin{equation*}
        \overrightarrow u_n^-=
        \left[
        \left(
          \begin{array}{c}
            \sin(\omega n+\varphi_+/2) \\
            \sin(\omega(n+1)+\varphi_+/2) \\
          \end{array}
        \right)
        +o(1)
        \right]
        f^-_n(\beta_+)\quad\text{as}~n\rightarrow \infty.
    \end{equation*}
  \item[\rm (iii)]  If $l=-\cos\omega$, then
    \begin{equation*}
        \overrightarrow u_n^+=(-1)^n
        \left[
        \left(
          \begin{array}{c}
            \cos(\omega n+\varphi_- /2) \\
            -\cos(\omega(n+1)+\varphi_-/2) \\
          \end{array}
        \right)
        +o(1)
        \right]
        f^+_n(\beta_-) \quad \text {as}~n\rightarrow \infty,
    \end{equation*}
    and
    \begin{equation*}
        \overrightarrow u_n^-=(-1)^n
        \left[
        \left(
          \begin{array}{c}
            \sin(\omega n+\varphi_-/2) \\
            -\sin(\omega(n+1)+\varphi_-/2) \\
          \end{array}
        \right)
        +o(1)
        \right]
        f^-_n(\beta_-)\quad\text{as}~n\rightarrow \infty.
    \end{equation*}
\end{itemize}
 \end{lem}

\begin{rem}
\label{rem1}
We do not consider the (double-root) case $|l|  = 1$ .
The analysis in this special case is technically involved
and we refer the reader to \cite{J06, JNSh07,NS10}.\
\end{rem}

\begin{proof}[Proof of Lemma~\ref{lemma general}]
Since $|l| \in { [0,+\infty)}\setminus  \{1\}$,  the constant term in the coefficient matrix in~\eqref{system on u}
can be diagonalized as follows
\begin{equation*}
\begin{pmatrix}
   1 & 1 \\
  \mu_+ & \mu_- \\
    \end{pmatrix}^{-1}
\begin{pmatrix}
    0 & 1 \\
      -1 & 2l \\
     \end{pmatrix}
     \begin{pmatrix}
    1 & 1 \\
        \mu_+ & \mu_- \\
      \end{pmatrix}
    =
   \begin{pmatrix}
        \mu_+ & 0 \\
        0 & \mu_- \\
\end{pmatrix}.
\end{equation*}
In view of the identity
\[
\begin{pmatrix}
    1 & 1 \\
        \mu_+ & \mu_- \\
   \end{pmatrix}^{-1}
=
\frac1{\mu_--\mu_+}
\begin{pmatrix}
         \mu_- & -1 \\
        -\mu_+ & 1 \\
      \end{pmatrix},
\]
the substitution
\begin{equation}
\label{substitution1}
    \overrightarrow u_n=
    \left(
      \begin{array}{cc}
        1 & 1 \\
        \mu_+ & \mu_- \\
      \end{array}
    \right)
    \overrightarrow{v}_n
\end{equation}
transforms the system \eqref{system on u} on $\overrightarrow u_n$
into the system on $\overrightarrow v_n$ given below
\begin{equation}
\label{system on v}
\begin{split}
 \overrightarrow{v}_{n+1}=&
 \Bigg[
 \begin{pmatrix}
 \mu_+ & 0 \\
   0        & \mu_- \\
  \end{pmatrix}
 +
 \frac{1}{\mu_- -\mu_+}
\begin{pmatrix}
-(a+\mu_+b) & -(a+\mu_-b) \\
(a+\mu_+b) & (a+\mu_-b) \\
\end{pmatrix}
\frac{e^{2i\omega n}}{n^{\gamma}} \\[0.5ex]
&\qquad +
\frac{1}{\mu_- -\mu_+}
\begin{pmatrix}
-(\ov a+\mu_+\ov b) & -( \ov a+\mu_-  \ov b) \\
(\ov a+\mu_+ \ov b) & (\ov a+\mu_- \ov b) \\
\end{pmatrix}
\frac{e^{-2i\omega n}}{n^{\gamma}}
    +R_n^{(2)} \Bigg]   \overrightarrow{ v}_n
\end{split}
\end{equation}
with some $\{R_n^{(2)}\}_{n\in\mathbb N}\in \ell^1(\mathbb N, \mathbb C^{2\times2})$.

\noindent (i) The case $|l| \in { [0,+\infty)}\setminus\{|\cos\omega|, 1\}$ splits into two subcases:
 $| l| > 1$ and $|l|\in [0,1)\setminus \{|\cos\omega|\}$.
The condition $|l|>1$ implies 
{ $\mu_+,\mu_- \in\dR\setminus\{0\}$ and
$|\mu_-| \ne |\mu_+|$}. Thus Lemma~\ref{lemma Benzaid-Lutz different modules}
is applicable to the system \eqref{system on v}  and it  gives us a basis. Reverting the substitution \eqref{substitution1} we get the statement.
The condition $|l|\in[0,1)\setminus\{|\cos\omega|\}$ implies  $|\mu_+| = |\mu_-|=1$.
We reduce this situation to Lemma \ref{lemma Benzaid-Lutz same modules}
by the following substitution:
\begin{equation}
\label{substitution2}
\overrightarrow{v}_n=
    \begin{pmatrix}
        \mu_+^n & 0 \\
        0 & \mu_-^n \\
      \end{pmatrix}
    \overrightarrow w_n.
\end{equation}
The system on $\overrightarrow w_n$ has the following form
\begin{equation}
\label{system on w}
\begin{split}
\overrightarrow w_{n+1}&\!=\!
    \Bigg[
    I\!+\!\frac{1}{\mu_- -\mu_+}
      \begin{pmatrix}
     -\mu_-( a\!+\!\mu_+ b) & -\mu_-( a\!+\!\mu_- b)\mu_-^{2n} \\
        \mu_+(a\!+\!\mu_+b)\mu_+^{2n} & \mu_+(a\!+\!\mu_- b) \\
      \end{pmatrix}
    \frac{e^{2i\omega n}}{n^{\gamma}}\\
    &\qquad +\frac{1}{\mu_--\mu_+}
    \begin{pmatrix}
        - \mu_-(\ov a+\mu_+\ov b) & -\mu_-(\ov a+\mu_-\ov b)\mu_-^{2n} \\
        \mu_+(\ov  a+\mu_+\ov b)\mu_+^{2n} & \mu_+(\ov a+\mu_- \ov b) \\
      \end{pmatrix}
    \frac{e^{-2i\omega n}}{n^{\gamma}}
    +R_n^{(3)}
    \Bigg]
    \overrightarrow w_n
\end{split}
\end{equation}
with some $\{R_n^{(3)}\}_{n\in\mathbb N}\in \ell^1(\mathbb N, \mathbb C^{2\times2})$.
We have used that $\mu_+\mu_- = 1$.
Now Lemma~\ref{lemma Benzaid-Lutz same modules} is applicable
 with $t_n = 0$ and
reverting substitutions \eqref{substitution1} and \eqref{substitution2} we get the statement.

\noindent (ii)
We start with the system \eqref{system on w}. In the case
that  $l = \cos\omega$,
Lemma~\ref{lemma Benzaid-Lutz same modules} can not be applied immediately.
In order to bring the system into the form in which Lemma~\ref{lemma Benzaid-Lutz same modules}
is applicable we make further transformations. Since $l = \cos\omega$, we clearly have  $\mu_{\pm}=e^{\pm i\omega}$,
and thus grouping summands in the right way we can rewrite our system in the form
\begin{equation*}
    \overrightarrow w_{n+1}=
    \left[
    I+\begin{pmatrix}
        0 & z_+ \\
       \ov{z_+}& 0 \\
      \end{pmatrix}
    \frac1{n^{\gamma}}+V_n
    \right]
    \overrightarrow w_n,
\end{equation*}
with $z_+=\beta_+ e^{i\varphi_+}$ defined in \eqref{betaphi}, and with
\begin{equation*}
\begin{split}
V_n :=& \frac{1}{(\mu_- -\mu_+)n^\gamma}\Bigg[
      \begin{pmatrix}
     -\mu_-( a+\mu_+ b) & 0\\
        \mu_+(a+\mu_+b)\mu_+^{2n} & \mu_+(a+\mu_- b) \\
      \end{pmatrix}
    e^{2i\omega n}\\[0.6ex]
     &\qquad\qquad\qquad +
    \begin{pmatrix}
        - \mu_-(\ov a+\mu_+\ov b) & -\mu_-(\ov a+\mu_-\ov b)\mu_-^{2n} \\
        0 & \mu_+(\ov a+\mu_- \ov b) \\
      \end{pmatrix}
    e^{-2i\omega n}\Bigg]
    +R_n^{(3)}.
\end{split}
\end{equation*}
The sequence $\{V_n\}_{n\in\dN}$ satisfies  the conditions of Lemma~\ref{lemma Benzaid-Lutz same modules}. Since
\begin{equation*}
\begin{split}
      \begin{pmatrix}
        e^{i\varphi_+/2} & \frac{e^{i\varphi_+/2}}{2i} \\
        e^{-i\varphi_+/2} & -\frac{e^{-i\varphi_+/2}}{2i}\\
        \end{pmatrix}^{-1}
        \begin{pmatrix}
          0 & \beta_+ e^{i\varphi_+} \\
          \beta_+ e^{-i\varphi_+} & 0 \\
        \end{pmatrix}
      \begin{pmatrix}
        e^{i\varphi_+/2} & \frac{e^{i\varphi_+/2}}{2i} \\
        e^{-i\varphi_+/2} & -\frac{e^{-i\varphi_+/2}}{2i}\\
      \end{pmatrix}
      \\
  =\begin{pmatrix}
     \beta_+ & 0 \\
       0 & -\beta_+ \\
    \end{pmatrix},
\end{split}
\end{equation*}
the substitution
\begin{equation}
\label{substitution3}
    \overrightarrow w_n=
      \begin{pmatrix}
        e^{i\varphi_+/2} & \frac{e^{i\varphi_+/2}}{2i} \\
        e^{-i\varphi_+/2} & -\frac{e^{-i\varphi_+/2}}{2i}\\
      \end{pmatrix}
    \overrightarrow{x}_n
\end{equation}
leads to the system on $\overrightarrow{x}_n$ of the form
\begin{equation}
\label{system on x}
    \overrightarrow{x}_{n+1}
    =
    \left[
    I+
    \left(
      \begin{array}{cc}
        \beta_+ & 0 \\
        0 & -\beta_+ \\
      \end{array}
    \right)\frac{1}{n^\gamma}
    +\widetilde V_n
    \right]
    \overrightarrow{x}_n,
\end{equation}
for which Lemma \ref{lemma Benzaid-Lutz same modules}
is applicable with $t_n = \frac{\beta_+}{n^\gamma}$.

For sufficiently  large $n_0$
\begin{equation}
\label{product}
    \prod\limits_{k=n_0}^n\left(1\pm\frac{\beta_+}{k^{\gamma}}\right)
    \sim \text{const}\cdot f_n^\pm(\beta_+),\qquad n\rightarrow\infty,
\end{equation}
with $f_n^\pm(\cdot)$ defined in \eqref{fn}.
Now we apply Lemma \ref{lemma Benzaid-Lutz same modules}
to the system \eqref{system on x} and
using \eqref{product} we get a basis of solutions of that system
of the form
\[
\overrightarrow x_n^+ = \Bigg[\begin{pmatrix} 1\\ 0\end{pmatrix} + o(1)\Bigg] f_n^+(\beta_+),\quad\overrightarrow x_n^- = \Bigg[\begin{pmatrix} 0\\1\end{pmatrix} + o(1)\Bigg] f_n^-(\beta_+),\qquad n\rightarrow\infty.
\]
After substituting this into \eqref{substitution3} one gets
a basis of solutions for the system \eqref{system on w}
of the form
\[
\overrightarrow w_n^+ =
\Bigg[
\begin{pmatrix}
e^{i\varphi_+/2}\\
 e^{-i\varphi_+/2} \end{pmatrix} + o(1)
\Bigg]
f_n^+(\beta_+),\quad
\overrightarrow w_n^- =
\Bigg[
\begin{pmatrix}
\frac{e^{i\varphi_+/2}}{2i} \\
-\frac{e^{-i\varphi_+/2}}{2i}\\
\end{pmatrix} +o(1)
\Bigg]
f_n^-(\beta_+), \quad n\rightarrow\infty.
\]
Substituting this into \eqref{substitution2},
one obtains a basis of solutions for the system \eqref{system on v}
of the form
\[
 \begin{split}
\overrightarrow v_n^+ =&
\Bigg[
\begin{pmatrix}
e^{i(\varphi_+/2 + \omega n)}\\
e^{-i(\varphi_+/2 + \omega n)}
\end{pmatrix}
+ o(1)\Bigg]
f_n^+(\beta_+),\qquad n\rightarrow\infty,\\[0.3ex]
\overrightarrow v_n^- =&
\Bigg[
\begin{pmatrix}
\frac{e^{i(\varphi_+/2+ \omega n)}}{2i} \\
-\frac{e^{-i(\varphi_+/2+ \omega n)}}{2i}\\
\end{pmatrix}
+ o(1)\Bigg]
f_n^-(\beta_+), \qquad n\rightarrow\infty.
 \end{split}
\]
Finally, substituting into \eqref{substitution1} we
get the claim.

\noindent (iii)
One can literally repeat the proof of item (ii) taking $\mu_{\pm}=-e^{\pm i\omega}$  and { replacing} $z_+,\beta_+$ and $\varphi_+$ by $z_-,\beta_-$ and $\varphi_-$, respectively.

\end{proof}

\subsection{Decomposition of the transfer matrices}
\label{sec:decomp}

Recall that $T_n(k)$ denotes the transfer matrix \eqref{T-n-k}.
In this section we decompose this transfer matrix  subject
to Model I and Model II in the form
\begin{equation}
\label{Tnk}
     T_n(k) =   \begin{pmatrix}
             0 & 1 \\
            -1 & 2l \\
          \end{pmatrix}
        +
       \begin{pmatrix}
           0 & 0 \\
            a & b \\
        \end{pmatrix}
        \frac{e^{2i\omega n}}{n^{\gamma}}
        +
\begin{pmatrix}
           0 & 0 \\
            \ov a & \ov b \\
\end{pmatrix}
        \frac{e^{-2i\omega n}}{n^{\gamma}}
        +R_n,
\end{equation}
where $R_n\in\ell^1(\dN;\dC^{2\times 2})$. This decomposition
allows to apply later Lemma~\ref{lemma general}. In order
{ not to} bore the reader with these long, but straightforward calculations we put them into the appendix.

\noindent {\em Model I: Amplitude perturbation.}
The sequence of strengths $\wt\alpha = \{\wt \alpha_n\}_{n\in\dN}$ and the discrete set
$\wt X = \{\wt x_n\colon n\in\dN\}$ are defined as follows:
\begin{equation*}
\label{amplitude}
    \wt\alpha_n:=\alpha_0+\frac{c\sin(2\omega n+\phi)}{n^{\gamma}}+q_n,\quad    \wt x_n:=nd \quad\text{and}\quad     \wt x_{n+1}- \wt  x_n\equiv d,
\end{equation*}
with $d, \alpha_0, c,\omega,{ \phi},\gamma, \{q_n\}_{n\in\dN}$ as in \eqref{conditions on constants}. Substitution of these expressions into \eqref{T-n-k} and calculation show that $T_n(k)$ for $k\notin\pi{\dN_0}/d$
has the representation of the type \eqref{Tnk} with the following values of the parameters:
\begin{equation}
\label{ab1}
\begin{split}
& l = \cos(kd) + \frac{\alpha_0 \sin(kd)}{2k} = { L}(k),\\[0.5ex]
& a =0 ,\qquad  b =  \frac{c\sin(kd)}{2ik}e^{i\phi}.
\end{split}
\end{equation}
{In the case $\lambda=0$ these expressions should be understood as their limits as $k\rightarrow0$, i.e.,
\begin{equation}
\label{ab10}
\begin{split}
l=&1+\frac{\alpha_0d}2= { L}(0),\\[0.5ex]
a=&0 ,\qquad b=\frac{cd}{2i}e^{i\phi}.
\end{split}
\end{equation}

\noindent {\em Model II: Positional perturbation.}
The sequence of strengths $\wh\alpha = \{\wh \alpha_n\}_{n\in\dN}$
and the discrete set $\wh X = \{\wh x_n\colon n\in\dN\}$ are defined as follows:
\begin{equation*}
\label{position}
\wh \alpha_n\equiv\alpha_0,\qquad \wh x_n:= nd+\frac{c\sin(2\omega n+\phi)}{n^{\gamma}}+q_n,
\end{equation*}
 with $d, \alpha_0, c,\omega,{ \phi},\gamma, \{q_n\}_{n\in\dN}$ as in \eqref{conditions on constants}.
In this case substitution into \eqref{T-n-k} and a tedious calculation give for $k\notin\pi{ \dN_0}/d$
the representation of the type \eqref{Tnk} with the following values of the parameters:
\begin{equation}
\label{ab2}
\begin{split}
 l  = & \cos(kd) + \frac{\alpha_0 \sin(kd)}{2k} = { L}(k),\\[0.5ex]
 a = & -2ikc\cot(kd) \sin^2(\omega)e^{i\phi},\\[0.5ex]
 b = & c\cos(kd)\sin(\omega)\Big[4k\cos(\omega)\cot(2kd) - 2k\cot(kd)e^{-i\omega} +\alpha_0e^{i\omega}\Big]e^{i\phi}.
\end{split}
\end{equation}
{Again, in the case $\lambda=0$ these expressions should be understood as their limits as $k\rightarrow0$, i.e.,
\begin{equation}
\label{ab20}
\begin{split}
l=&1+\frac{\alpha_0d}2= { L}(0),\\[0.5ex]
a=&-2ic\sin^2(\omega)e^{i\phi}/d,\\[0.5ex]
b=&c\sin(\omega)[2\cos(\omega)/d-2e^{-i\omega}/d+\alpha_0e^{i\omega}]e^{i\phi}.
\end{split}
\end{equation}}

%
%
%

\subsection{Asymptotics of generalized eigenvectors for the spectral bands}
\label{sec:main}
In this section we obtain
using Lemma~\ref{lemma general} asymptotics of  solutions $\xi$
of the difference equation \eqref{equation for xi}
in the cases of Model I and Model II.
Using the statements from Section~\ref{sec:subord}
we come from the asymptotics of these solutions to the conclusions
about the spectral bands and the critical points inside them.

In the table below we list certain functions playing a role in the main theorem of this section.
\begin{center}
\begin{table}[h]
\begin{tabular}{|c| c | c |}
\hline
&Model I& Model II\\
\hline
&&\\
$\beta(k) := $& $\Big|\frac{c\sin(kd)}{4k\sin\omega}\Big|$ & $\big|\frac{c\alpha_0}{2}\big|$\\
&&\\
\hline
&&\\
$\vartheta_\pm(k) := $ & $\frac{1}{2}\big(\arg\big(\mp \frac{c\sin(kd)}{k}\big)+\phi\big)$  & $\frac12\big(\arg(ic\alpha_0)+\phi\big)$\\
&&\\
\hline
\end{tabular}
\vspace{3mm}
\caption{Notation for Theorem~\ref{thm:asymp1}.}
\label{table}
\end{table}
\vspace{-3mm}
\end{center}
As before, in the case $\lambda=0$ the expressions are understood as their limits as $k\rightarrow0$, i.e., for the Model I { we get} the following:
\begin{equation*}
\beta(0)=\left|\frac{cd}{4\sin\omega}\right|,\qquad \vartheta_{\pm}(0)=\frac12(\arg(\mp cd)+\phi),
\end{equation*}
and for the Model II the same as in the table.

%

\begin{thm}
\label{thm:asymp1}
Let the  parameters $d$, $\alpha_0$,  $c$, $\omega$,  $\gamma$, { $\phi$}, and $\{q_n\}_{n\in\dN}$
be as in \eqref{conditions on constants}.
Let the function ${ L(\cdot)}$ be as in \eqref{Lyapunov function for delta}.
Assume either that $X =\wt X$,  $\alpha = \wt \alpha$ as in Model I or that $X = \wh X$, $\alpha = \wh \alpha$  as
in Model II. Then for
$\lambda \in \dR$ with $k := \sqrt{\lambda}$ satisfying
$|{ L}(k)|\ne 1$ the finite difference equation \eqref{equation for xi}  has a basis of  solutions
$\xi^{\pm}(k) = \{\xi_n^\pm(k)\}_{n \ge N(k)}$ with the following asymptotics:
\begin{itemize}\setlength{\itemsep}{1.2ex}

\item[\rm (i)] If $\big|{ L}(k)\big|\in { [0,+\infty)}\setminus\{|\cos\omega|,1\}$, then
\begin{equation*}
\xi_n^{\pm}(k)=\bigg({ L}(k)\pm\sqrt{{ (L(k))^2}-1}\bigg)^n\big(1+o(1)\big)
\quad\text{as}~n\rightarrow\infty.
\end{equation*}

\item[\rm (ii)]
If ${ L}(k)=\cos\omega$, then
\begin{equation*}
\begin{split}
\xi^+_n(k) =\big(\cos(\omega n+\vartheta_+(k))+o(1)\big)f^+_n(\beta(k)) \quad\text{as}~n\rightarrow\infty,\\[0.5ex]
\xi^-_n(k) = \big(\sin(\omega n+\vartheta_+(k))+o(1)\big)f^-_n(\beta(k)) \quad \text{as}~n\rightarrow\infty.
\end{split}
\end{equation*}
If ${ L}(k)= - \cos\omega$, then
\begin{equation*}
\begin{split}
\xi^+_n(k) =(-1)^{n}\big(\cos(\omega n+\vartheta_-(k))+o(1)\big)f^+_n(\beta(k))\quad\text{as}~n\rightarrow\infty,\\[0.5ex]
\xi^-_n(k) = (-1)^{n}\big(\sin(\omega n+ \vartheta_-(k))+o(1)\big)f^-_n(\beta(k)) \quad \text{as}~n\rightarrow\infty.
\end{split}
\end{equation*}
Here $f_n^\pm(\cdot)$ are defined in \eqref{fn} and the functions  $\beta(\cdot)$ and $\vartheta_{\pm}(\cdot)$ are
given in the 1st column of the Table \ref{table} in the case of Model I and
in the 2nd column in the case of Model II, respectively.
\end{itemize}
\end{thm}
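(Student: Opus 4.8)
The plan is to reduce the finite difference equation \eqref{equation for xi} to the discrete linear system \eqref{system for delta}, invoke the decomposition of the transfer matrix given in Section~\ref{sec:decomp}, and then apply Lemma~\ref{lemma general}. First I would fix $\lambda = k^2$ with $k = \sqrt\lambda$, and note that for $\lambda$ outside the discrete exceptional set $\pi\dZ/d$ (where some $s_n(k)$ vanishes and the reduction is not available) the equation \eqref{equation for xi} is equivalent to the system $\overrightarrow u_{n+1} = T_n(k)\overrightarrow u_n$ with $\overrightarrow u_n = (\xi_{n-1},\xi_n)^\top$ and $T_n(k)$ as in \eqref{T-n-k}. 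For Model~I the centers are equispaced so $s_n(k)\equiv\sin(kd)$, and for Model~II the $q_n$-term and the Wigner-von Neumann term are summable perturbations of $nd$, so in both cases the transfer matrix admits the representation \eqref{Tnk} with the parameter values \eqref{ab1}, respectively \eqref{ab2}, and with $R_n\in\ell^1(\dN,\dC^{2\times 2})$; this is exactly the hypothesis of Lemma~\ref{lemma general}. The non-degeneracy of $T_n(k)$ for every $n$ is clear since $\det T_n(k) = s_n(k)/s_{n-1}(k)\ne 0$ off the exceptional set.

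Next I would translate the conclusion of Lemma~\ref{lemma general} back into a statement about the scalar sequences $\xi_n^\pm$. In case (i), when $|L_\delta(k)| = |l|\in\dR_+\setminus\{\cos\omega,1\}$, Lemma~\ref{lemma general}(i) (together with Remark~\ref{rem2} for the case $l<0$, under which $|\mu_\pm|\ne 1$ exactly when $|l|>1$, and $|\mu_\pm|=1$ with $\mu_\pm$ not a root of unity relevant here when $|l|<1$, $|l|\ne\cos\omega$) gives a basis $\overrightarrow u_n^\pm = [(1,\mu_\pm)^\top + o(1)]\mu_\pm^n$, whence reading off the first (or second) component and using $\mu_\pm = L_\delta(k)\pm\sqrt{L_\delta(k)^2-1} = \Phi(L_\delta(k))^{\pm 1}$ yields $\xi_n^\pm(k) = \Phi(L_\delta(k))^{\pm n}(1+o(1))$. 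In case (ii), when $L_\delta(k) = \cos\omega$, Lemma~\ref{lemma general}(ii) gives the two solutions with the oscillating prefactors $\cos(\omega n+\varphi/2)$, $\sin(\omega n+\varphi/2)$ multiplied by $f_n^\pm(\beta)$; I would then substitute the explicit values of $a,b$ from \eqref{ab1} (Model~I) or \eqref{ab2} (Model~II) into the definitions \eqref{betaphi} of $\beta$ and $\varphi$, and identify $\beta(k)$ and $\vartheta_\pm(k) = \varphi/2$ with the entries of Table~1. The case $L_\delta(k) = -\cos\omega$ is handled identically via Remark~\ref{rem2}, picking up the $(-1)^n$ factor and the parameters $\beta_1,\varphi_1$ from \eqref{betaphi2}, which again reduce to the Table~1 expressions after substituting $a,b$.

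The main obstacle is the bookkeeping in the two substitution computations: verifying that \eqref{ab1} and \eqref{ab2} are correct, and then checking that plugging them into \eqref{betaphi} (and \eqref{betaphi2}) really produces the compact formulas for $\beta(k)$ and $\vartheta_\pm(k)$ listed in Table~1. For Model~II in particular, $a = -2ikc\cot(kd)\sin^2\omega$ and the bracketed expression for $b$ are somewhat involved, and one must see a good deal of cancellation: the combination $ae^{-i\omega} + be^{-2i\omega}$ should collapse (up to the $2i\sin\omega$ denominator) to something of modulus $|c\alpha_0/2|$ with argument $\tfrac12\arg(ic\alpha_0)$, independent of $k$ apart from sign considerations that get absorbed into the $\pm$ in $\vartheta_\pm$. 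I would organize this by writing $e^{\pm i\omega}$ in terms of $\cos\omega,\sin\omega$, using $l = \cos\omega$ (so $\mu_\pm = e^{\pm i\omega}$, $\cos(kd)$ and $\cot(2kd)$ get tied to $\cos\omega$ through $L_\delta(k)=\cos\omega$), and simplifying; but since the problem only asks for a plan I would present the final identifications and relegate the algebra to a remark, as the authors already do in Section~\ref{sec:decomp}.

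Finally, for completeness I would remark that the cases excluded from the statement are precisely those excluded in Lemma~\ref{lemma general}: $|L_\delta(k)| = 1$ (the double-root band edges, cf.\ Remark~\ref{rem1}) and the exceptional set $k\in\pi\dZ/d$ where the discretization \eqref{equation for xi} itself breaks down; everywhere else in $\dR$ one of the two regimes (i) or (ii) applies, so the dichotomy in the theorem is exhaustive.
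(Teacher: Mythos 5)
Your plan is essentially the paper's own proof: fix $k$ with $\big|L_\delta(k)\big|\ne 1$, reduce \eqref{equation for xi} to the system \eqref{system for delta} with the decomposition \eqref{Tnk} and parameters \eqref{ab1}, \eqref{ab2}, apply Lemma~\ref{lemma general} (with Remark~\ref{rem2} when $L_\delta(k)<0$), and identify $z$, $z_1$ from \eqref{betaphi}, \eqref{betaphi2} with Table~1 — the paper records the outcome of that algebra as $z=-z_1=-\frac{c\sin(kd)e^{-2i\omega}}{4k\sin\omega}$ for Model~I and $z=z_1=\frac{ic\alpha_0e^{-2i\omega}}{2}$ for Model~II, then extracts the second component of $\overrightarrow u_n^\pm$. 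The only bookkeeping slip in your write-up is the identification $\vartheta_\pm(k)=\varphi/2$: since $\xi_n$ is the second component of $\overrightarrow u_n$, the correct relation is $\vartheta_\pm=\varphi/2+\omega$ (the factor $e^{-2i\omega}$ in $z$, $z_1$ is absorbed by this index shift), after which the Table~1 entries come out exactly.
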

\begin{proof}
The transfer matrix $T_n(k)$ of the discrete linear system \eqref{system for delta}
decomposes into the form in \eqref{system on u}
with the parameters $a$, $b$ and $l$ given in~\eqref{ab1}  and \eqref{ab10} for the Model I
and given in~\eqref{ab2}  and \eqref{ab20} for the Model II, respectively. Note also
that $T_n(k)$ is non-degenerate for all  $n$ large enough.

In the case $\big|{ L}(k)\big| \in { [0,+\infty)}\setminus \{|\cos\omega|,1\}$
Lemma~\ref{lemma general}\,(i) could be applied and it automatically implies the item (i).
Assume now ${ L}(k) = \pm \cos\omega$. In the case of Model I we plug into the formula
\eqref{betaphi} the values $a$ and $b$ from \eqref{ab1} or \eqref{ab10} and we get the following
\[
 z_+(k)=-z_-(k)=-\frac{c\sin(kd)e^{i(\phi-2\omega)}}{4k\sin\omega},
  \quad{
 \Bigg(
z_+(0) = -z_-(0) =
 -\frac{cde^{i(\phi-2\omega)}}{4\sin\omega}\Bigg),}
\]
see  the { appendix}. In the case of Model II we plug into the formula \eqref{betaphi}
the values  $a$ and $b$  from \eqref{ab2} or \eqref{ab20} and using that
$L(k) = \pm\cos\omega$
after a long and tedious calculation (provided in the appendix) we get
\[
 z_+(k) = z_-(k) = \frac{ic\alpha_0e^{i(\phi-2\omega)}}{2}
\]
Now Lemma~\ref{lemma general}\,(ii) could be applied.
It gives us the asymptotics for the sequences of $\dC^2$-vectors
\[
\overrightarrow u_n^\pm(k)=
\begin{pmatrix}
\xi_{n-1}^\pm(k) \\
\xi_n^\pm(k)
\end{pmatrix}.
\]
Extracting the second components from these asymptotics we obtain the statement.
\end{proof}

Given the asymptotics, we can now come to the conclusions about the structure
of the absolutely continuous spectrum using the subordinacy theory,
see Section~\ref{sec:subord}.
Let the parameters $d$, $\alpha_0, c$, $\omega$, $\phi$  and  $\gamma$ be as in
\eqref{conditions on constants}. Recall that the self-adjoint operator
$H_{\varkappa}$ corresponds to the half-line Kronig-Penney  model without perturbation,
where the  distances between interaction   centers are constant and equal to $d$
and the strengths of interactions are constant and equal to $\alpha_0$.
The spectral properties of this operator are discussed in the introduction.
The absolutely continuous spectrum of the operator $H_{\varkappa}$ is the set
\begin{equation*}
\label{set}
\sigma_{\rm ac}(H_{\varkappa}) :=\Big\{\lambda\in { \dR}\colon { L}\big(\sqrt{\lambda}\big)\in[-1,1]\Big\}
\end{equation*}
with  the interior part
\[
{\rm Int}\big(\sigma_{\rm ac}(H_{\varkappa})\big)
=\Big\{\lambda\in { \dR}\colon { L}\big(\sqrt{\lambda}\big)\in(-1,1)\Big\}.
\]
Denote the set of all critical points by
\[
\frs_{\rm cr} :=
\Big\{\lambda\in { \dR}\colon { L}\big(\sqrt{\lambda}\big) =
\pm \cos\omega \Big\} \subset {\rm Int}\big(\sigma_{\rm ac}(H_{\varkappa})\big).
\]
\begin{cor}
\label{cor:main}
Let the parameters $d$, $\alpha_0, c$, $\omega$,  $\gamma$, $\phi$  and $\{q_n\}$
be as in \eqref{conditions on constants}, let $\varkappa\in[0,\pi)$,  and let
the sequences $\widetilde X, \widetilde\alpha$ and $\wh X, \wh\alpha$
be as in \eqref{wt X}, \eqref{wt alpha} and in \eqref{wh X}, \eqref{wh alpha}, respectively. Then
the following statements hold.
\begin{itemize}
\item [\rm (i)]
The spectrum of the operators $H_{\varkappa,\wt X,\wt\alpha}$
and $H_{\varkappa,\wh X,\wh\alpha}$
is purely absolutely continuous on the set
\[
{\rm Int}\big(\sigma_{\rm ac}(H_{\varkappa})\big)\setminus\frs_{\rm cr}.
\]
\item[\rm (ii)]
Let  $\lambda\in\frs_{\rm cr}$ and set $k := \sqrt{\lambda}$.
If either $\gamma < 1$ or
\[
\gamma=1\quad\text{and}\quad\Bigg|\frac{c\sin(kd)}{2k\sin\omega}\Bigg| > 1 \quad\left(\text{or, in the case $\lambda=0$},  \Bigg|\frac{cd}{2\sin\omega}\Bigg| > 1\right),
\]
then there exists a unique value $\varkappa$ such that $\lambda$  is an embedded eigenvalue
of $H_{\varkappa, \wt X, \wt\alpha}$.
\item[\rm (iii)]{
If either $\gamma < 1$ or
\[
\gamma=1\quad\text{and}\quad |c\alpha_0|>1,
\]
then there exists a unique value $\varkappa$ such that  $\lambda\in\frs_{\rm cr}$  is an embedded eigenvalue
of $H_{\varkappa, \wh X, \wh\alpha}$.}
\end{itemize}
\end{cor}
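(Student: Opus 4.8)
The plan is to combine Theorem~\ref{thm:asymp1} with the subordinacy criteria of Section~\ref{sec:subord}, treating the three items separately but with a common thread: translate the asymptotics of the generalized eigenvectors $\xi_n^\pm(k)$ into statements about subordinate solutions, and then invoke the appropriate proposition. For item (i), fix $\lambda=k^2$ with $L_\delta(k)\in(-1,1)\setminus\{\pm\cos\omega\}$. Then $|L_\delta(k)|\in\dR_+\setminus\{\cos\omega,1\}$, so Theorem~\ref{thm:asymp1}\,(i) applies and gives a basis $\xi^\pm(k)$ with $\xi_n^\pm(k)=(\Phi(L_\delta(k)))^{\pm n}(1+o(1))$. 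Since $L_\delta(k)\in(-1,1)$ one has $|\Phi(L_\delta(k))|=1$, so both basis solutions are bounded, hence every solution of \eqref{equation for xi} is bounded; reconstructing $\psi$ via \eqref{psi from xi} and using $\liminf_n|s_n(k)|>0$ (which holds because $|L_\delta(k)|\ne1$), every solution of the spectral equation $\tau_{X,\alpha}\psi=\lambda\psi$ is bounded. Now Proposition~\ref{prop:Stolz2} shows there is no subordinate solution at such $\lambda$, and then Proposition~\ref{prop:stolz} (applied on small open intervals exhausting ${\rm Int}(\sigma_{\rm ac}(H_\varkappa))\setminus\frs_{\rm cr}$) gives pure absolute continuity there. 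One should remark that $\frs_{\rm cr}$ together with the band edges is a discrete set, so these open intervals indeed cover the claimed set.

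For items (ii) and (iii), fix a critical point $\lambda=k^2\in\frs_{\rm cr}$, so $L_\delta(k)=\pm\cos\omega$, and use Theorem~\ref{thm:asymp1}\,(ii). The key observation is that the factors $f_n^+(\beta(k))$ and $f_n^-(\beta(k))$ behave reciprocally: $f_n^+f_n^-=1$. When $\gamma<1$, $f_n^-(\beta(k))=\exp(-\beta(k)n^{1-\gamma}/(1-\gamma))$ decays (super-polynomially), hence $\xi^-(k)\in\ell^2(\dN)$, while $f_n^+(\beta(k))$ grows, so $\xi^+(k)\notin\ell^2$; when $\gamma=1$, $f_n^\mp(\beta(k))=n^{\mp\beta(k)}$, and $\xi^-(k)\in\ell^2(\dN)$ precisely when $2\beta(k)>1$, which is exactly the stated condition ($\beta(k)=|c\sin(kd)/(4k\sin\omega)|$ for Model~I, so $2\beta(k)>1\iff|c\sin(kd)/(2k\sin\omega)|>1$; and $\beta(k)=|c\alpha_0/2|$ for Model~II, so $2\beta(k)>1\iff|c\alpha_0|>1$). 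Under the stated hypotheses, therefore, $\xi^-(k)$ is a genuinely decaying (indeed $\ell^2$) solution and $\xi^+(k)$ is not in $\ell^2$; moreover the oscillatory prefactors $\cos(\omega n+\vartheta_\pm)$, $\sin(\omega n+\vartheta_\pm)$ keep the two solutions linearly independent, and $\xi^-$ is subordinate to $\xi^+$ (one checks $\int_0^{x_n}|\psi^-|^2/\int_0^{x_n}|\psi^+|^2\to0$ using \eqref{psi from xi} and $\liminf|s_n(k)|>0$, since $L_\delta(k)=\pm\cos\omega\ne\pm1$).

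It remains to convert ``$\xi^-(k)\in\ell^2$'' into ``$\lambda$ is an eigenvalue for exactly one $\varkappa$.'' Here I would argue as follows. The generalized eigenvector $\xi^-(k)$, being in $\ell^2(\dN)$ with $\liminf|s_n(k)|>0$, corresponds via \eqref{psi from xi} to an $L^2(\dR_+)$ solution $\psi^-$ of the spectral equation on $(x_1,\infty)$ — one must check square-integrability on each interval $[x_n,x_{n+1}]$, which follows from the explicit bound $|\psi^-(x)|\le(|\xi_{n-1}^-|+|\xi_n^-|)/|s_n(k)|$ used already in the proof of Proposition~\ref{prop:Stolz2}, together with $\psi^-\in L^2([0,x_1])$ automatically. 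This $\psi^-$ is unique up to a scalar (any subordinate solution is unique up to scalar, and here the decaying solution is the subordinate one). Extending $\psi^-$ to $[0,x_1]$ as a solution of $-\psi''=\lambda\psi$ with the matching value $\psi^-(x_1)=\xi_1^-$, it satisfies the self-adjointness boundary condition \eqref{bc} for exactly one $\varkappa\in[0,\pi)$, namely the $\varkappa$ determined by the ratio $\psi^-(0):\psi^-{}'(0)$ (including $\varkappa=0$ if $\psi^-(0)=0$). For that $\varkappa$, $\psi^-$ is an eigenfunction of $H_{\varkappa,X,\alpha}$, so $\lambda\in\sigma_{\rm pp}$; for any other $\varkappa'$, any $L^2$-eigenfunction would have to be a scalar multiple of the unique $L^2$-solution $\psi^-$, which fails the boundary condition, so $\lambda\notin\sigma_{\rm pp}(H_{\varkappa',X,\alpha})$. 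This gives existence and uniqueness of $\varkappa$, as claimed in (ii) for Model~I and (iii) for Model~II, since the constants $\beta(k)$ plugged in are exactly those from Table~1.

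The main obstacle I anticipate is the bookkeeping in the last step: carefully verifying that the $\ell^2$ generalized eigenvector really yields an $L^2(\dR_+)$ function (uniform control of $|s_n(k)|$ away from zero, which is where $L_\delta(k)\ne\pm1$ is used), that it is the \emph{subordinate} solution (so that in the borderline $\gamma=1$, $2\beta(k)\le1$ case \emph{neither} solution is $\ell^2$ and hence $\lambda$ is not an eigenvalue for any $\varkappa$ — this is why the hypothesis is needed and what makes ``critical'' points critical), and that the map $\psi^-\mapsto\varkappa$ is well-defined and injective on the relevant fibre. None of this is deep, but it is the part where the statement ``for a unique $\varkappa$'' is actually earned; the spectral input from Theorem~\ref{thm:asymp1} and the subordinacy machinery do all the heavy lifting.
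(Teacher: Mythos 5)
Your proposal is correct and follows essentially the same route as the paper: item (i) via boundedness of all solutions from Theorem~\ref{thm:asymp1}\,(i) combined with Propositions~\ref{prop:stolz} and~\ref{prop:Stolz2}, and items (ii)--(iii) by observing that the stated conditions make $\xi^-(k)$ (hence, via \eqref{psi from xi} and $\liminf_n|s_n(k)|>0$, the corresponding solution $\psi^-$) square-integrable, so that exactly one $\varkappa\in[0,\pi)$ matches the boundary condition \eqref{bc}. You merely spell out details the paper leaves implicit (the $2\beta(k)>1$ computation for $\gamma=1$ and the uniqueness of the $L^2$ solution up to a scalar), which is fine.
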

\begin{proof}
First of all note that for $k^2\in{\rm Int}\,\sigma_{\rm ac}(H_\varkappa)$,  $k\ne 0$, holds $\liminf_{n\rightarrow\infty}\big|s_n(k) \big |>0$ (for both models).

In the case that {  $k^2\in{\rm Int}
\big(\sigma_{\rm ac}(H_{\varkappa})\big)\setminus  \frs_{\rm cr}$} from the asymptotics of $\{\xi^\pm_n(k)\}_{n \ge N(k)}$ in Theorem~\ref{thm:asymp1}\,(i)
and the formula \eqref{psi from xi}  it follows that all solutions of the spectral equations for both operators $H_{\varkappa,\wt X,\wt\alpha}$
and $H_{\varkappa,\wh X,\wh\alpha}$ are bounded { sufficiently far away from the origin}.
It is also clear that all these
solutions are everywhere bounded,
since they are continuous.
The assertion of the item (i) follows now from Proposition~\ref{prop:stolz}, Proposition~\ref{prop:Stolz2} and boundedness of all solutions.

Conditions of the item (ii) and Theorem~\ref{thm:asymp1}\,(ii) guarantee that the finite
difference equation~\eqref{equation for xi} (or~\eqref{equation for xi 0}, if $\lambda=0$) with $X = \wt X$ and $\alpha =
\wt\alpha$ has the unique square-summable
solution $\{\xi_n^-(k)\}_{n \ge N(k)}$.
This solution can be lifted via the formula
\eqref{psi from xi} (\eqref{psi from xi 0}, respectively) uniquely up to $\wt\psi\in L^2(\wt x_{N(k)},+\infty)$ satisfying $-\wt\psi''(x)=\lambda\wt\psi(x)$ for all
$x\in (\wt x_{N(k)},+\infty)\setminus \wt X$
and interface $\delta$-boundary conditions at
the points $\{\wt x_n\}_{n > N(k)}$.
Clearly enough, one can continue $\wt\psi$ in the unique way
to the left up to $\psi\in L^2(\dR_+)$,
which satisfies \eqref{spectral equation for delta} with $X = \wt X$
and $\alpha = \wt\alpha$.
This fact means that there is a unique value of the boundary parameter $\varkappa$ such that this  $\psi\in L^2(\dR_+)$ satisfies the boundary condition  at the origin,
and thus in this case $\lambda$ is an eigenvalue { of $H_{\varkappa, \wt X,\wt \alpha}$}.

 The proof of item (iii) goes along the same steps as the proof of (ii). One has only to replace $\wt X$ and $\wt\alpha$ by $\wh X$ and
$\wh\alpha$, respectively.
\end{proof}

\subsection{Spectrum in gaps}
\label{sec:compact}
In this section we show that the spectrum
of the operators  $H_{\varkappa,\wt X,\wt \alpha}$
and $H_{\varkappa,\wh X,\wh \alpha}$ in $\dR\setminus\sigma_{\rm ac}(H_\varkappa)$  is discrete.
This follows from Proposition \ref{prop:compact} below
which is of certain independent interest
and which generalizes also  \cite[Theorem III.2.6.2]{AGHH05} and \cite[Theorem 1]{M95}.
Before formulating  and proving this proposition we provide
some of the required definitions and notations.
Let the discrete set $X = \{x_n\colon n\in\dN\}$ satisfy
the condition~\eqref{condition on X}.
Let $\alpha\in\ell^\infty(\dN;\dR)$ be fixed.
It is known that the self-adjoint   operator with the Neumann boundary condition at the origin  $H_{X,\alpha,\frac{\pi}{2}}$
defined in  Section~\ref{sec:def}
corresponds to the sesquilinear form
\begin{equation}
\label{fra}
\fra_{X,\alpha}[u,v] := (u',v')_{L^2(\dR_+)} +
\sum_{n=1}^\infty \alpha_n u(x_n)\ov{v(x_n)},\quad
\dom \fra_{X,\alpha} := H^1(\dR_+),
\end{equation}
via the first representation theorem, cf.
\cite[Lemma III.1]{AKM10}.
In what follows we keep shorthand notations $(\cdot,\cdot)_{L^2}$
and $(\cdot,\cdot)_{\ell^2}$ for the scalar products
in $L^2(\dR_+)$ and $\ell^2(\dN)$, respectively.

\begin{prop}
\label{prop:compact}
Let the discrete sets
$X = \{x_n\}_{n\in\dN}$ and $X' = \{x_n'\}_{n\in\dN}$
both satisfy the condition \eqref{condition on X}
and also $x_n -x_n'\rightarrow 0$.
Assume that the interaction strengths
$\alpha, \alpha' \in \ell^\infty(\dN;\dR)$ satisfy
$\alpha_n - \alpha_n'\rightarrow 0$.
Let the self-adjoint operators
$H :=  H_{X,\alpha,\varkappa}$ and
$H' := H_{X',\alpha',\varkappa}$
be as in  Section~\ref{sec:def}.
Then their resolvent difference
\begin{equation}
\label{resdiff}
(H -\lambda)^{-1} - (H' -\lambda)^{-1}
\end{equation}
is compact for all $\lambda\in \rho(H)\cap\rho(H')$,
and, in particular, $\sess(H) = \sess(H')$ holds.
\end{prop}

\begin{proof}
Without loss of generality we assume that $\varkappa = \frac{\pi}{2}$
since variation of $\varkappa$ leads to rank-one perturbations
of the underlying operators and does not affect  compactness
of the resolvent differences in \eqref{resdiff}.

\noindent
{\em Step 1.}
Consider two linear bounded mappings
\begin{equation*}
\label{tautau'}
\begin{split}
\tau\colon H^1(\dR_+)\rightarrow \ell^2(\dN),& \qquad
\tau f := \{f(x_n)\}_{n\in\dN},\\
\tau'\colon H^1(\dR_+)\rightarrow \ell^2(\dN),& \qquad
\tau' f := \{f(x_n')\}_{n\in\dN}.
\end{split}
\end{equation*}
Note that boundedness of the mappings $\tau$ and $\tau'$
is implicitly shown in the proof of \cite[Lemma III.1]{AKM10}.
Denote by $I_n$ the interval between the points $x_n$ and $x_n'$
with  the length $|I_n| = |x_n - x_n'|$. Employing Cauchy-Schwarz inequality we obtain
\begin{equation}
\label{CS}
\big|f(x_n) - f(x_n')\big|^2  =
\Bigg|\int_{I_n} f'(x){\rm d} x\Bigg|^2\le
|I_n|\int_{I_n} |f'(x)|^2.
\end{equation}
Let $P_N$
be the orthogonal projection in $\ell^2(\dN)$ onto first $N$ elements.
Decompose
\[
\tau - \tau' =  \tau_N^{<} + \tau_N^{>},
\]
where $\tau_N^{<} := P_N(\tau - \tau')$ and
$\tau_N^{>} := (I - P_N)(\tau - \tau')$.
The mapping
$\tau_N^{<}$ is compact for any $N\in\dN$ due to compactness
of the operator $P_N$.  For sufficiently
large $N$ the intervals $\{I_n\}_{n> N}$ are mutually disjoint.
Thus, in view of \eqref{CS} we get
for sufficiently large $N$ and
arbitrary $f\in H^1(\dR_+)$
\[
\|\tau_N^{>}f\|^2_{\ell^2} =
\sum_{n=N+1}^\infty|f(x_n) -f(x_n')|^2 \le
\sum_{n=N+1}^\infty|I_n|\int_{I_n}|f'(x)|^2.
\]
Hence, we arrive at
\[
\|\tau_N^{>}\|\le\sup_{n >N} \sqrt{|I_n|}\rightarrow 0,\qquad N\rightarrow \infty.
\]
Thus, by \cite[Chapter III, Theorem 4.7]{Kato}
the mapping $\tau - \tau'$ is compact.

\noindent
 {\em Step 2.}
The operators
$H$ and $H'$ are semibounded from below since
they represent semibounded sequilinear forms.
Hence we can fix a constant $a > 0$
such that $H + a > 0$ and $H' + a > 0$.
We denote $W:=(H+a)^{-1}-(H'+a)^{-1}$. Let $f,g\in L^2(\dR_+)$ and set
\begin{equation}
\label{uv}
u := (H +a)^{-1}f,\qquad
v: = (H' +a)^{-1}g.
\end{equation}
Using the above formulae and the definition
of the operator $W$ we obtain
\[
\begin{split}
(Wf,g)_{L^2} &=
\big((H +a)^{-1}f,g\big)_{L^2}-
\big((H'+a)^{-1}f,g\big)_{L^2} \\
&= (u,g)_{L^2}\! -\! (f,(H' +a)^{-1}g)_{L^2}
=\big(u,(H' +a)v\big)_{L^2} -
\big((H +a)u,v\big)_{L^2}\\
&=
(u,H'v)_{L^2} - (Hu,v)_{L^2}.
\end{split}
\]
This formula can be rewritten in a more suitable way. Observe
that both functions $u$ and $v$ belong to
$H^1(\dR_+)$, which is the form domain of the operators
$H$ and $H'$. Hence,  using \eqref{fra} and
the first representation we get
\[
(Wf,g)_{L^2} = \fra_{X',\alpha'}[u,v]-\fra_{X,\alpha}[u,v]=(\alpha' \tau' u, \tau' v)_{\ell^2}-
(\alpha \tau u, \tau v)_{\ell^2},
\]
which can be further transformed into
\begin{equation}
\label{Wfg2}
(Wf,g)_{L^2} =
((\alpha' -\alpha) \tau u, \tau v)_{\ell^2}+
(\alpha' (\tau' -\tau) u, \tau v)_{\ell^2}
+(\alpha' \tau' u, (\tau' -\tau) v)_{\ell^2}.
\end{equation}
Define now the operators
\[
S_1 := \tau(H+a)^{-1},\quad S_2 := \tau'(H+a)^{-1},\quad \text{and}
\quad
S_3 := \tau(H'+a)^{-1}.
\]
Note that $(H+a)^{-1}$ and $(H'+a)^{-1}$ are bounded  as operators from
$L^2(\dR_+)$ into $H^1(\dR_+)$. Thus
we get that $S_k$ for $k=1,2,3$ are bounded as operators from $L^2(\dR_+)$
into $\ell^2(\dN)$.
Define another two operators,
\[
T_1 := (\tau' - \tau)(H+a)^{-1}\quad\text{and}\quad
T_2 := (\tau' - \tau)(H'+a)^{-1}.
\]
According to the result of Step 1
we obtain that $T_1$ and $T_2$
are compact from $L^2(\dR_+)$ into $\ell^2(\dN)$.
With the above notations in hands and using \eqref{uv}
we can rewrite \eqref{Wfg2} as
\[
(Wf,g)_{L^2} =
\big((\alpha' - \alpha)S_1f,S_3g)_{\ell^2}+
(\alpha'T_1f,S_3g)_{\ell^2} + (\alpha'S_2f,T_2g)_{\ell^2}.
\]
Hence, we get the following formula:
\[
W = S_3^*(\alpha' - \alpha)S_1 + S_3^*\alpha'T_1 + T_2^*\alpha'S_2.
\]
Boundedness of
$S_1, S_3$  and $\alpha_n - \alpha_n'\rightarrow 0$  imply compactness of $S_3^*(\alpha' - \alpha)S_1$.
Compactness of $T_1$, $T_2$ and boundedness of $S_2$, $S_3$
yield compactness of $S_3^*\alpha'T_1$ and $T_2^*\alpha'S_2$.
Thus the operator $W$ is compact.
Hence, by standard arguments which can be found for instance
in \cite[Lemma 6.21]{Te}, the resolvent difference
\eqref{resdiff} is then compact for all $\lambda\in\rho(H)\cap\rho(H')$.
\end{proof}

\begin{cor}
\label{cor:ess}
Let the discrete sets $\wt X$ and $\wh X$ be as in \eqref{wt X}
and \eqref{wh X}, respectively. Let the sequences $\wt \alpha$
and $\wh\alpha$ be, respectively, as in \eqref{wt alpha} and
\eqref{wh alpha}.  Let the self-adjoint operators
$H_{\varkappa,\wt X,\wt\alpha}$ and $H_{\varkappa,\wh X,\wh\alpha}$
be associated with $\wt X,\wt\alpha$
and $\wh X,\wh\alpha$, respectively, as in
Section~\ref{sec:def}.
Then  the relation
\[
\sess(H_{\varkappa,\wt X,\wt \alpha}) =
\sess(H_{\varkappa,\wh X,\wh \alpha}) =
\big\{ \lambda\in{ \dR}\colon
{ L}(\sqrt{\lambda}) \in [-1,1]\big\}
\]
holds, where ${ L}(\cdot)$ is as in
\eqref{Lyapunov function for delta}. In particular,
spectra of $H_{\varkappa,\wt X,\wt \alpha}$ and
$H_{\varkappa,\wh X,\wh\alpha}$ are discrete in the set
$\dR\setminus \big\{ \lambda\in { \dR}\colon
{ L}(\sqrt{\lambda}) \in [-1,1]\big\}$.
\end{cor}
\begin{proof}
Recall that the operator $H_{\varkappa}$ corresponding
as in Section~\ref{sec:def}
to the discrete set $\{nd\colon n\in\dN\}$ and constant
interaction strength $\alpha_0{\in\dR}$ has the essential spectrum
$
\{\lambda\in { \dR}\colon
{ L}(\sqrt{\lambda}) \in [-1,1]\big\}$.
The claims follow from $\wt x_n - nd = 0$, $\wt\alpha_n -\alpha_0 \rightarrow 0$,
$\wh x_n - nd \rightarrow 0$, $\wh\alpha_n -\alpha_0 = 0$
and Proposition~\ref{prop:compact}.
\end{proof}

\begin{rem}
Rybkin proved in \cite{R05} that for the half-line Schr\"odinger operator with distributional potential of the form $p  + q'$
with arbitrary $p,q\in L^2(\dR_+)$ the absolutely continuous
spectrum coincides with the interval $[0,+\infty)$.
It is not difficult to see that
$\sum_{n\in\dN}\alpha_n\delta_{x_n} = p + q'$
for
\[
p(x) = \sum_{n\in\dN}\tfrac{\alpha_n}{|x_{n}-x_{n-1}|}
\chi_{[x_{n-1},x_n]}(x),
\quad q(x) = -\sum_{n\in\dN}
\tfrac{\alpha_n(x-x_{n-1})}{|x_{n}-x_{n-1}|}
\chi_{[x_{n-1},x_n]}(x),
\]
where $\chi_I$ is the characteristic function
of the interval $I$. One may check that
$p,q$ as above are square-integrable for $X =\wt X$
and $\alpha = \wt\alpha$ with $\alpha_0 = 0$.
Note that Corollary~\ref{cor:main}\,(i) and Corollary~\ref{cor:ess} imply in the special case $\alpha_0 = 0$  that $\sigma_{\rm ac}(H_{\varkappa,\wt X,\wt\alpha}) = [0,+\infty)$,
which agrees well with the result of \cite{R05}.
\end{rem}

\section*{Appendix}
Recall that the transfer matrix $T_n(k)$
corresponding to the spectral parameter
 $\lambda  \in\dR\setminus\{0\}$ with $k := \sqrt{\lambda}$
for the operator with point interactions
supported on the discrete set
$X = \{x_n\colon n\in\dN\}\subset\dR_+$
with coupling constants $\alpha = \{\alpha_n\}_{n\in\dN}$
has the form
\begin{equation}\label{T-n-k appendix}
    T_n(k)=\left(
    \begin{array}{cc}
    0 & 1 \\
    -\frac{s_n(k)}{s_{n-1}(k)}
	&
    \frac{\sin(k(x_{n+1}-x_{n-1}))}{s_{n-1}(k)}+\frac{\alpha_ns_n(k)}{k}   \\
    \end{array}
    \right),
\end{equation}
 where
$s_n(k) = \sin(k(x_{n+1} -x_n))$ and
$c_n(k) = \cos(k(x_{n+1}-x_n))$, and it is implicitly
assumed that $s_n(k)\ne 0$ for all sufficiently
large $n\in\dN$. For $k=0$ we use the formula
\begin{equation*}
T_n(0) :=\left(
    \begin{array}{cc}
    0 & 1 \\
    -\frac{x_{n+1}-x_{n}}{x_n -x_{n-1}}
	&
    \frac{x_{n+1}-x_{n-1}}{x_n -x_{n-1}}+\alpha_n(x_{n+1} -x_n)  \\
    \end{array}
    \right)
\end{equation*}
instead of~\eqref{T-n-k appendix}. Our aim in  this appendix is { to reduce}
the transfer matrix $T_n(k)$ subject to Models~I and~II
to the form of \eqref{system on u} with subsequent computation
of the corresponding parameters $z_{\pm}$ defined in \eqref{betaphi}.
\subsection*{Model I}
In this section we deal with the discrete set
$ \wt X$ as in \eqref{wt X}
and the sequence of interaction
strengths $\wt\alpha$ as in \eqref{wt alpha}.
Assume in what follows that $k\notin{ \tfrac{\pi\dN_0}{d}}$
in which case $s_n(k)\ne 0$ for all $n\in\dN$.
The transfer matrix $T_n(k)$ subject to $X  = \wt X$ and
$\alpha = \wt\alpha$ can be rewritten in the form
\begin{equation}
\label{T-appendix-new}
\begin{split}
T_n(k) &=
\left(
    \begin{array}{cc}
    0 & 1 \\
    -1
	&
    \frac{\sin(2kd)}{\sin(kd)}+\frac{\wt \alpha_n\sin(kd)}{k}   \\
    \end{array}
    \right)\\
    & =
    \left(
    \begin{array}{cc}
    0 & 1 \\
    -1
	&
    2l   \\
    \end{array}
    \right) +
	\left(
    \begin{array}{cc}
    0 & 0 \\
    0	& \frac{c\sin(kd)}{k}\\
    \end{array}
    \right)\frac{\sin(2\omega n+\phi)}{n^\gamma}
    + \left(
    \begin{array}{cc}
    0 & 0 \\
    0	& \frac{q_n\sin(kd)}{k}\\
    \end{array}
    \right)
    \end{split}
\end{equation}
with $l = \cos(kd) + \alpha_0\tfrac{\sin(kd)}{2k}$.
Clearly, we have
\[
R_n :=
	\left(
    \begin{array}{cc}
    0 & 0 \\
    0	& \frac{q_n\sin(kd)}{k}\\
    \end{array}
    \right)\in\ell^1(\dN;\dC^{2\times 2}).
\]
Using the identity $\sin(2\omega n) =
\frac{1}{2i}(e^{2i\omega n} - e^{-2i\omega n})$
we get
\[
T_n(k)\! =\! \left(
    \begin{array}{cc}
    0 & 1 \\
    -1
	&
    2l   \\
    \end{array}
    \right) +
    \left(
    \begin{array}{cc}
    0 & 0 \\
    a	& b\\
    \end{array}
    \right)\frac{e^{2i\omega n}}{n^\gamma}
    + \left(
    \begin{array}{cc}
    0 & 0 \\
    \ov{a}	& \ov {b}\\
    \end{array}
    \right)\frac{e^{-2i\omega n}}{n^\gamma} + R_n,
\]
with
\[
a= 0\quad\text{and}\quad b = \frac{c\sin(kd)}{2ik}e^{i\phi}.
\]
Thus, according to \eqref{betaphi}  we arrive
at
\[
z_+ = -\frac{c\sin(kd)e^{i(\phi-2\omega)}}{4k\sin\omega}=-z_-.
\]

\subsection*{Model II}
In this section we deal with the discrete set
$ \wh X$ as in \eqref{wh X}
and the sequence of interaction
strengths $\wh\alpha$ as in \eqref{wh alpha}.
Assume in what follows that $k\notin{ \tfrac{\pi\dN_0}{d}}$
in which case $s_n(k)\ne 0$ for all sufficiently large $n\in\dN$.
We denote by $\{r_n^{(m)}\}_{n\in\dN}$ with $m\in\dN$  generic sequences from $\ell^1(\dN)$.
Note that the Taylor-type expansion
\begin{equation}
\label{snkexp}
\begin{split}
s_n(k)
&=
\sin\Big(kd + ck\Big(\tfrac{\sin(2\omega(n+1)+\phi)}{(n+1)^\gamma}-
\tfrac{\sin(2\omega n+\phi)}{n^\gamma}\Big) + k(q_{n+1} -q_n)\Big) \\
&= \sin(kd) +
\tfrac{ck\cos(kd)\big(\sin(2\omega(n+1)+\phi) -\sin(2\omega n+\phi)\big)}{n^\gamma} + r_n^{(1)}\\
\end{split}
\end{equation}
holds. Using the above expansion we obtain
\[
\begin{split}
\tfrac{s_n(k)}{s_{n-1}(k)} &=
\tfrac{1 + ck\cot(kd)n^{-\gamma}
\big(\sin(2\omega(n+1)+\phi) -\sin(2\omega n+\phi)\big) + r_n^{(1)}}
{1 + ck\cot(kd)n^{-\gamma}
\big(\sin(2\omega n+\phi) -\sin(2\omega (n-1)+\phi)\big) + r_{n-1}^{(1)}} \\
&=
1 + \tfrac{ck\cot(kd)\big(\sin(2\omega(n+1)+\phi) + \sin(2\omega(n-1)+\phi)
- 2\sin(2\omega n+\phi)\big)}{n^\gamma}+r_n^{(2)}, \\
\end{split}
\]
which can be simplified making use of standard trigonometric identities to
\begin{equation}
\label{21}
\tfrac{s_n(k)}{s_{n-1}(k)} = 1 -
\tfrac{4ck\cot(kd)\sin^2(\omega)\sin(2\omega n+\phi)}{n^\gamma}
+r_n^{(2)}.
\end{equation}
Again utilising the expansion \eqref{snkexp} we get
\begin{equation}
\label{22}
\tfrac{\alpha_0 s_n(k)}{k} =
\tfrac{\alpha_0\sin(kd)}{k} +
\tfrac{2\alpha_0c\cos(kd)\sin(\omega)\cos((2n+1)\omega+\phi)}{n^\gamma}
+ r_n^{(3)}.
\end{equation}
Using \eqref{snkexp} and the Taylor-type expansion
\[
\begin{split}
&\sin(k(\wh x_{n+1}\! -\! \wh x_{n-1})) \\
&\qquad=\sin\Big(2kd + ck
\Big(\tfrac{\sin(2\omega(n+1)+\phi)}{(n+1)^\gamma}-
\tfrac{\sin(2\omega (n-1)+\phi)}{(n-1)^\gamma}\Big)
+ k(q_{n+1} - q_{n-1})\Big) \\
&\qquad= \sin(2kd) +
\tfrac{ck\cos(2kd)\big(\sin(2\omega(n+1)+\phi) - \sin(2\omega (n-1)+\phi)\big)}{n^\gamma}+ r_n^{(4)}\\
\end{split}
\]
we arrive at
\[
\begin{split}
&\tfrac{\sin(k(\wh x_{n+1} - \wh x_n))}{s_{n-1}(k)} =
\tfrac{2\cos(kd) +
ck\cos(2kd)(\sin(kd)n^\gamma)^{-1}
\big(\sin(2\omega(n+1)+\phi) - \sin(2\omega (n-1)+\phi)\big)
+ r_n^{(4)}}
{1 + ck\cot(kd)n^{-\gamma}
 \big(\sin(2\omega n+\phi) - \sin(2\omega (n-1)+\phi)\big)+r_{n-1}^{(1)}}\\
&\quad= 2\cos(kd) +
\tfrac{2ck\cos(2kd)
\sin(2\omega)\cos(2\omega n+\phi)}{\sin(kd)n^\gamma}
-\tfrac{4ck\cos^2(kd)
\sin\omega\cos((2n-1)\omega+\phi)}{\sin(kd)n^\gamma}
 +r_n^{(5)},\\
\end{split}
\]
which can be further rewritten as
\begin{equation}
\label{22'}
\begin{split}
\tfrac{\sin(k(\wh x_{n+1} - \wh x_n))}{s_{n-1}(k)}&\!=\!2\cos(kd) +
w_n + r_n^{(5)}, \quad\text{where}\\
w_n & := \tfrac{
4ck\cos(kd)\sin\omega\big(
2\cot(2kd)\cos\omega\cos(2\omega n+\phi)
- \cot(kd)\cos((2n-1)\omega+\phi)\big)}{n^\gamma}.
\end{split}
\end{equation}
Employing \eqref{21}, \eqref{22}, the above formulae,
and the identities
\[
\sin\alpha=
\tfrac{e^{i\alpha} - e^{-i\alpha}}{2i},\qquad
\cos\alpha =
\tfrac{e^{i\alpha} + e^{-i\alpha}}{2},
\]
the transfer matrix $T_n(k)$ subject to $X  = \wh X$ and
$\alpha = \wh\alpha$ can be rewritten in the form
\[
T_n(k) =
\begin{pmatrix}
0 & 1\\
-1 & 2l
\end{pmatrix} +
\begin{pmatrix}
0 &  0\\
a & b
\end{pmatrix}\frac{e^{2i\omega n}}{n^\gamma}
+\begin{pmatrix}
0 &  0\\
\ov{a} & \ov{b}
\end{pmatrix}\frac{e^{-2i\omega n}}{n^\gamma} +R_n
\]
with
\[
\begin{split}
 l  = & \cos(kd) + \frac{\alpha_0 \sin(kd)}{2k},\\[0.5ex]
 a = & -2ikc\cot(kd) \sin^2(\omega)e^{i\phi},\\[0.5ex]
 b = & c\cos(kd)\sin(\omega)\Big[4k\cos(\omega)\cot(2kd) - 2k\cot(kd)e^{-i\omega} +\alpha_0e^{i\omega}\Big]e^{i\phi}.
\end{split}
\]
To compute the value
\[
z_+=\frac{a e^{-i\omega} + be^{-2i\omega}}{2i\sin\omega}
\]
for $l=\cos\omega$ we define an auxiliary value
\begin{equation*}
\label{z'}
z_+' :=z_+ e^{i (\omega-\phi) }2i\sin\omega  =
e^{-i\phi}(a  + be^{-i\omega}).
\end{equation*}
For the real part of $z_+'$
we get
\[
\Re(z_+')  =
c\cos(kd)\sin\omega
\big[
4k\cos^2\omega\cot(2kd) - 2k\cot(kd)\cos(2\omega) + \alpha_0\big].
\]
Simplification
\[
\begin{split}
&4k\cos^2\omega\cot(2kd) - 2k\cot(kd)\cos(2\omega)
= \tfrac{4k\big(\cos^2(\omega)\cos(2kd)-\cos^2(kd)\cos(2\omega)\big)}{\sin(2kd)}
\\
&\qquad= \tfrac{4k\big(\cos^2(kd)-\cos^2(\omega)\big)}{\sin(2kd)}=
-\tfrac{\alpha_0\sin(kd)}{2k}\tfrac{4k\big(\cos(kd)+\cos(\omega)\big)}{\sin(2kd)}  =
 -\alpha_0\left(1+\tfrac{\cos(\omega)}{\cos(kd)}\right),
\end{split}
\]
where we have used the equality $l=\cos(\omega)=\cos(kd)+\frac{\alpha_0\sin(kd)}{2k}$,
gives us
\begin{equation*}
\label{Re}
\Re(z_+') =-\alpha_0c\sin\omega\cos\omega.
\end{equation*}
For the imaginary part of $z_+'$ we get
\[
\begin{split}
\Im(z_+')  & =
-2kc\cot(kd)\sin^2\omega +
c\cos(kd)\sin\omega\sin(2\omega)
\big[-2k\cot(2kd) + 2k\cot(kd)\big] \\
&=
\tfrac{2kc\sin^2(\omega)\big(\cos(\omega)-\cos(kd)\big)}{\sin(kd)}
=\alpha_0c\sin^2(\omega),
\end{split}
\]
which gives us
\[
z_+'=-\alpha_0c\sin\omega e^{-i\omega}
\]
and
\[
z_+=\frac{ic\alpha_0e^{i(\phi-2\omega)}}2.
\]
To compute $z_-$
\[
z_- = \frac{ae^{-i\omega} - be^{-2i\omega}}{2i\sin\omega}
\]
for $l = -\cos\omega=\cos(kd)+\frac{\alpha_{0}\sin(kd)}{2k}$ we define
\begin{equation*}
\label{z-'}
z_-':=  z_-e^{i(\omega-\phi)}2i\sin\omega =e^{-i\phi}( a-be^{-i\omega}).
\end{equation*}
Similarly as above we obtain
\begin{equation*}
\label{ReIm'}
\begin{split}
\Re(z_-') &= -
\tfrac{ck\sin\omega\big[2(\cos^2(kd) - \cos^2(\omega)) + \tfrac{\alpha_0}{2k}\sin(2kd)\big]}{\sin(kd)}=-\alpha_0c\sin\omega\cos\omega,\\
\Im(z_-') &=
\tfrac{ck\sin\omega\big[-2\cos(kd)\sin\omega - \sin(2\omega)\big]}{\sin(kd)}=\alpha_0c\sin^2\omega.
\end{split}
\end{equation*}
Thus $z_-'=z_+'$ and hence $z_-=z_+$.

In the case $k=0$ one has to replace the expressions { in} all the formulas by their values at zero or { their} limits as $k\rightarrow0$. In particular, the expression~\eqref{T-appendix-new} takes the form
\[
\begin{split}
T_n(0) &=
    \left(
    \begin{array}{cc}
    0 & 1 \\
    -1
	&
    2l   \\
    \end{array}
    \right) +
	\left(
    \begin{array}{cc}
    0 & 0 \\
    0	& cd\\
    \end{array}
    \right)\frac{\sin(2\omega n+\phi)}{n^\gamma}
    + \left(
    \begin{array}{cc}
    0 & 0 \\
    0	& q_nd\\
    \end{array}
    \right)
    \end{split},
\]
in~\eqref{21} the right-hand side should be replaced by
\[
1-\frac{4c\sin^2(\omega)\sin(2\omega n+\phi)}{dn^{\gamma}}+r_n^{(2)},
\]
in~\eqref{22} by
\[
\alpha_0d+\frac{2\alpha_0c\sin(\omega)\cos((2n+1)\omega+\phi)}{n^{\gamma}}+r_n^{(3)},
\]
and in~\eqref{22'} by
\[
2+\frac{4c\sin\omega(\cos\omega { \cos}(2\omega n+\phi)-\cos((2n-1)\omega+\phi))}{dn^{\gamma}}+r_n^{(5)}.
\]
}

\subsection*{Acknowledgments}

The authors wish to express their gratitude to
Prof. Sergey Naboko
for his constant attention to this work
and many fruitful discussions on the subject.
Prof. G\"unter Stolz and Dr. Aleksey Kostenko
are acknowledged for important comments and valuable help.
V.~L. thanks Technische Universit\"at Wien for hospitality,
where a part of this work was done. His work
was supported by the Austrian  Science Fund (FWF), project P 25162-N26.
S. S. was supported by the Chebyshev Laboratory
 (Department of Mathematics and Mechanics, Saint-Petersburg State University) under the grant 11.G34.31.0026 of the Government of the Russian Federation, by grants  RFBR-09-01-00515-a, RFBR-12-01-00215-a and 11-01-90402-Ukr\_f\_a, by the Erasmus Mundus Action 2 Programme of the European Union, and Irish Research Council (Government of Ireland Postdoctoral Fellowship in Science, Engineering and Technology).


\begin{thebibliography}{XXXXXX}
%

{
\bibitem[Al72]{Al72}
S.~Albeverio,
On bound states in the continuum of $N$-body systems and the virial theorem,
\textit{Ann. Physics} \textbf{71} (1972), 167--276.
}

\bibitem[AGHH05]{AGHH05}
S.~Albeverio, F.~Gesztesy, R.~Hoegh-Krohn and H.~Holden,
\textit{Solvable models in quantum mechanics. With an appendix by Pavel Exner.},
AMS Chelsea Publishing, 2005.


\bibitem[AKM10]{AKM10}	
S.~Albeverio, A.~Kostenko and M.~Malamud,
Spectral theory of semibounded Sturm-Liouville operators with local interactions on a discrete set,
\textit{J.\ Math.\ Phys.} \textbf{51}  (2010), 102102.

\bibitem[Be91]{Be91}
H.~Behncke,
Absolute continuity of   Hamiltonians with von Neumann-Wigner Potentials I,
\textit{Proc. Amer. Math. Soc.} \textbf{111} (1991), 373--384.

\bibitem[Be94]{Be94}
H.~Behncke,
The $m$-function for Hamiltonians with Wigner-von Neumann potentials,
\textit{J.\ Math.\ Phys.} \textbf{35} (1994), 1445--1462.


\bibitem[BL87]{BL87}
Z.~Benzaid and D.~Lutz,
Asymptotic representation of solutions of perturbed systems of linear difference equations,
\textit{Stud.\ Appl.\ Math.} \textbf{77}  (1987), 195--221.



%
%

%


\bibitem[EGNT13]{EGNT13}
J.~Eckhardt, F.~Gesztesy, R.~Nichols,  and G.~Teschl,
Weyl-Titchmarsh theory for Sturm-Liouville operators with distributional potentials,
\textit{Opuscula\ Math.} \textbf{33} (2013),  467--563.

\bibitem[ET13]{ET13}
J.~Eckhardt and G.~Teschl,
Sturm-Liouville operators with measure-valued coefficients,
\textit{J.\ Anal.\ Math.} \textbf{120} (2013), 151--224.

\bibitem[E97]{E97}
P.~Exner, A duality between Schr\"odinger operators on graphs and certain Jacobi matrices,
\textit{Ann.\ Inst.\ Henri\ Poincare,\ Phys.\ Theor.} \textbf{66}  (1997),  359--371.

\bibitem[GK85]{GK85}
F.~Gesztesy and W.~Kirsch,
One-dimensional Schr\"odinger operators with interactions singular on a discrete set,
\textit{J.\ Reine\ Angew.\ Math.} \textbf{362} (1985), 28--50.

\bibitem[GP87]{GP87}
D.~Gilbert and D.~Pearson,
On subordinacy and analysis of the spectrum of one-dimensional Schr\"odinger operators,
\textit{J.\ Math.\ Anal.\ Appl.} \textbf{128} (1987), 30--56.



\bibitem[GO10]{GO10}
N.~Goloschapova and L.~Oridoroga,
On the negative spectrum of one-dimensional Schr\"odinger operators with point interactions,
\textit{Integral\ Equations\ Operator\ Theory} \textbf{67} (2010), 1--14.

\bibitem[HL75]{HL75}
W.~A.~jun. Harris and D.~A.~Lutz,
Asymptotic integration of adiabatic oscillators,
\textit{J.\ Math.\ Anal.\ Appl.} \textbf{51} (1975), 76--93.

\bibitem[HIT10]{HIT10}
J.~Hernandez-Herrejon, F.~Izrailev and L.~Tessieri,
Anomalous localization in the aperiodic Kronig-Penney model,
\textit{J.\ Phys.\ A.\ Math.\ Theor.} \textbf{43} (2010), 425004.





\bibitem[HKS91]{HKS91}
D.~Hinton, M.~Klaus and J.~Shaw,
Embedded halfbound states for potentials of Wigner-von Neumann type,
\textit{Proc.\ Lond.\ Math.\ Soc.}, \textbf{6}  (1991), 607--646.

\bibitem[J06]{J06}
J.~Janas,
The asymptotic analysis of generalized eigenvectors of some Jacobi
  operators. Jordan box case,
\textit{ J.\ Difference\ Equ.\ Appl.}, \textbf{12} (2006), 597--618.

\bibitem[JNSh07]{JNSh07}
J.~Janas, S.~Naboko and E.~Sheronova,
Jacobi Matrices arising in the spectral phase transition phenomena:
asymptotics of generalized eigenvectors in the "double root" case,
\textit{Z. Anal. Anwend.} \textbf{28} (2009), 411--430.

\bibitem[JS10]{JS10}
J.~Janas and S.~Simonov,
Weyl-Titchmarsh type formula for discrete Schroedinger operator with Wigner-von Neumann potential,
\textit{Studia Math.}. \textbf{201} (2010), 167--189.



\bibitem[K]{Kato}
T.~Kato, \textit{Perturbation theory for linear operators. Reprint of the 1980 edition},
Springer-Verlag, Berlin, 1995.

\bibitem[KP92]{KP92}
S.~Khan and D.~Pearson,
Subordinacy and spectral theory for infinite matrices,
\textit{Helv.\ Phys.\ Acta} \textbf{65} (1992), 505--527.
%

\bibitem[Ko89]{Ko89}
A.~Kochubei,
One-dimensional point interactions,
\textit{Ukrain.\ Math.\ J.} \textbf{41} (1989), 1391--1395.

\bibitem[KM10]{KM10}
A.~Kostenko and M.~Malamud,
1-D Schr\"odinger operators with local point interactions on a discrete set,
\textit{J.\ Differential Equations} \textbf{249} (2010), 253--304.

\bibitem[KP31]{KP31}
R.~de L.~Kronig and W.\,G.~Penney,
Quantum mechanics of electrons in crystal lattices,
\textit{Proc.\ Roy.\ Soc.\ Lond.} \textbf{130} (A) (1931), 499--513.

\bibitem[KN07]{KN07} P.~Kurasov and S.~Naboko,
Wigner-von Neumann perturbations of a periodic potential: spectral singularities in bands,
\textit{Math.\ Proc.\ Camb.\ Philos.\ Soc.} \textbf{142}  (2007), 161--183.

\bibitem[KS13]{KS11}
P.~Kurasov and S.~Simonov,
Weyl-Titchmarsh type formula for periodic Schr\"odinger operator with Wigner-von Neumann potential,
\textit{Proc.\ Roy.\ Soc.\ Edinburgh\ Sect.\ A},
\textbf{143} (2013), 401--425.

\bibitem[L48]{L48}
N.~Levinson, The asymptotic nature of solutions of linear systems of differential equations,
\textit{Duke Math.\ J.} \textbf{15} (1948), 111--126.


\bibitem[L11]{L11}
V.~Lotoreichik,
Singular continuous spectrum of half-line Schr\"odinger operators with point interactions on a sparse set,
\textit{Opuscula\ Math.} \textbf{31} (2011), 615--628.



\bibitem[L10]{L10}
M.~Lukic, Orthogonal polynomials with recursion coefficients of generalized bounded variation,
\textit{Comm.\ Math.\ Phys.} \textbf{306} (2011),  485--509.


\bibitem[LO13]{LO13}
M.~Lukic and D.\,C.~Ong,
Wigner-von Neumann type perturbations of periodic Schr\"odinger operators, \textit{arXiv:1305.6124.}

\bibitem[Ma73]{Ma73}
V.\,B.~Matveev,
Wave operators and positive eigenvalues for a Schr\"odinger equation with oscillating potential,
\textit{Theor.\ Math.\ Phys.} \textbf{15}  (1973), 574--583.

\bibitem[M95]{M95}
V.\,A.~Mikhailets,
Spectral properties of the one-dimensional Schr\"odinger operator with point intersections,
\textit{Rep.\ Math.\ Phys.} \textbf{36} (1995), 495--500.

\bibitem[NS10]{NS10}
S.~Naboko and S.~Simonov,
Spectral analysis of a class of hermitian Jacobi matrices in a
critical (double root) hyperbolic case,
{\it Proc.\ Edinb.\ Math.\ Soc. (2)}, \textbf{53} (2010), 239--254.


\bibitem[NS12]{NS11} S.~Naboko and S.~Simonov,
Zeros of the spectral density of the periodic Schr\"odinger operator with Wigner-von Neumann potential,
\textit{Math.\ Proc.\ Cambridge\ Philos.\ Soc.},
\textbf{153} (2012), 33--58.

\bibitem[N07]{N07} P.~Nesterov,
Averaging method in the asymptotic integration problem for systems with oscillatory-decreasing coefficients,
\newblock {\it Differ.\ Equ.} \textbf{43} (2007), 745--756.

\bibitem[vNW29]{vNW29}
J.~von Neumann and E.~Wigner,
\"Uber das Verhalten von Eigenwerten bei adiabatischen Prozessen,
\textit{Physikal.\ Z.} \textbf{30} (1929), 467--470.

\bibitem[RS78]{RS78}
M.~Reed and B.~ Simon,
\textit{Methods of Modern Mathematical Physics, vol. 4}, New York: Academic Press, 1978.

\bibitem[R05]{R05}
A.~Rybkin,
On the spectral $L^2$ conjecture, 3/2-Lieb-Thirring inequality and distributional potentials,
\textit{J.\ Math.\ Phys.} \textbf{46} (2005), 123505, 8 pp.

\bibitem[SS99]{SS99}
A.\,M.~Savchuk and A.\,A. Shkalikov,
Sturm-Liouville operators with singular potentials,
\textit{Math.\ Notes} \textbf{66} (2000), 741--753.

\bibitem[SS03]{SS03}
A.\,M.~Savchuk and A.\,A. Shkalikov,
Sturm-Liouville operators with distribution potentials,
\textit{Trans.\ Moscow\ Math.\ Soc.} (2003), 143--192.

\bibitem[SCS94]{SCS94}
C.~Shubin Christ  and G.~Stolz,
Spectral theory of one-dimensional Schr\"odinger operators with point interactions,
\textit{J.\ Math.\ Anal.\ Appl.}, \textbf{184} (1994), 491--516.



\bibitem[S12]{S12}
S.~Simonov,
Zeroes of the spectral density of discrete Schr\"odinger operator with Wigner-von Neumann potential,
\textit{Integral\ Equations\ Operator\ Theory}
\textbf{73} (2012), 351--364.


\bibitem[S92]{S92} G. Stolz, Bounded solutions and absolute continuity of Sturm-Liouville operators,
\textit{J.\ Math.\ Anal.\ Appl.} \textbf{169} (1992),  210--228.

\bibitem[Te]{Te} G.~Teschl, \textit{Mathematical methods in quantum
mechanics. With applications to Schr\"odinger operators}, American
Mathematical Society,  Providence, 2009.


\end{thebibliography}
\end{document}